\newcommand{\Fp}{\mathbb{\overline{F}}_p}
\newcommand{\mbQ}{\mathbb{Q}}
\newcommand{\mbR}{\mathbb{R}}
\newcommand{\mbZ}{\mathbb{Z}}
\newcommand{\mbP}{\mathbb{P}}
\newcommand{\mcO}{\mathcal{O}}
\DeclareMathOperator{\Supp}{Supp}
\DeclareMathOperator{\Pic}{Pic}
\newcommand*{\coloneq}{\mathrel{\mathop:}=}
\theoremstyle{plain}
\newtheorem{theorem}{Theorem}[section]
\newtheorem{proposition}[theorem]{Proposition}
\newtheorem{lemma}[theorem]{Lemma}
\newtheorem{question}[theorem]{Question}
\theoremstyle{definition}
\newtheorem{definition}[theorem]{Definition}
\theoremstyle{remark}
\newtheorem{remark}[theorem]{Remark}
\title[On base point free theorem for lc threefolds over $\Fp$]
{On base point free theorem and Mori dream spaces for log canonical threefolds over 
the algebraic closure of a finite field
}
\author{Yusuke Nakamura}
\address{Graduate School of Mathematical Sciences, 
The University of Tokyo, 3-8-1 Komaba, Meguro-ku, Tokyo 153-8914, Japan.}
\email{nakamura@ms.u-tokyo.ac.jp}
\author{Jakub Witaszek}
\address{Department of Mathematics, Imperial College, London, 180 Queen's Gate, 
London SW7 2AZ, UK} 
\email{j.witaszek14@imperial.ac.uk}
\begin{document}
\begin{abstract}
The authors and D.\ Martinelli proved in \cite{MNW} the base point free theorem for big line bundles 
on a three-dimensional log canonical 
projective pair defined over the algebraic closure of a finite field. 
In this paper, we drop the bigness condition when the characteristic is larger than five. 
Additionally, we discuss Mori dream spaces defined over the algebraic closure of a finite field. 
\end{abstract}

\subjclass[2010]{Primary 14E30; Secondary 14C20}
\keywords{base point free theorem, semiample line bundles, positive characteristic, finite fields}

\maketitle

\section{Introduction}
One of the most prominent problems in algebraic geometry is to classify all smooth projective varieties. In characteristic zero, this goal has been partially achieved due to the developments of the Minimal Model Program.

	In order to construct a single step of the MMP, it is necessary to invoke the base point free theorem (cf.\ \cite[Theorem 3.1.1]{KMM}), which asserts that a nef line bundle $D$ 
on a Kawamata log terminal projective pair $(X, \Delta)$ 
defined over an algebraically closed field of characteristic zero 
is semiample when $D - (K_X + \Delta)$ is nef and big. Unfortunately, the existence of a resolution of singularities 
and the Kawamata--Viehweg vanishing theorem are essential in the proof, and hence it is not valid in positive characteristic. Even so, Tanaka managed to prove the base point free theorem for positive characteristic surfaces, and Keel showed it for threefolds over the algebraic closure of a finite field $\Fp$ in the case when $D$ is big.

	Recent years brought further breakthrough results. The progress in the positive characteristic Minimal Model Program (\cite{HX}, \cite{CTX}, \cite{Birkar}), led to a proof of the base point free theorem for threefolds over algebraically closed fields of characteristic $p$ larger than five, with an assumption that $D$ is big (\cite{Birkar} and \cite{Xu}), and, eventually, without the bigness assumption (\cite{BW}, see Theorem \ref{theorem:bpfBW}).

Over the algebraic closure $\Fp$ of a finite field,  
the authors and D.\ Martinelli proved in \cite{MNW} a stronger result in the case when $D$ is big. 

\begin{theorem}[{\cite[Theorem 1.1]{MNW}}]\label{theorem:MNW}
Let $(X, \Delta)$ be a three-dimensional projective log canonical pair defined over $\Fp$, 
and let $D$ be a nef and big $\mbQ$-Cartier $\mbQ$-divisor on $X$. 
If $D - (K_X + \Delta)$ is also nef and big, then $D$ is semiample. 
\end{theorem}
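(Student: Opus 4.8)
The plan is to deduce the statement from Keel's base point free theorem, Tanaka's base point free theorem for surfaces, and the fact that over $\Fp$ every numerically trivial line bundle on a proper scheme is torsion. By Keel's theorem, over $\Fp$ a nef line bundle is semiample if and only if its restriction to the null locus $\mathbb{E}(D)=\bigcup\{\,Z\subseteq X : D^{\dim Z}\cdot Z=0\,\}$ is semiample; this is a closed subset, and since $D$ is big it has dimension at most two and is contained in the augmented base locus $\mathbb{B}_{+}(D)$. It therefore suffices to prove that $D|_{\mathbb{E}(D)}$ is semiample, and I would do so by treating the irreducible components of $\mathbb{E}(D)$ separately and gluing at the end.

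On a component $Z$ of dimension at most one, $D|_{Z}$ has degree zero on every irreducible component of the curve $Z$ and so is numerically trivial there; as $\Pic^{0}$ of a proper scheme over $\Fp$ is a torsion group, $D|_{Z}$ is torsion and in particular semiample. The genuine content lies in a two-dimensional component $S$, on which $D|_{S}$ is nef with $(D|_{S})^{2}=0$ but need not be numerically trivial, so that surface theory is unavoidable.

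For such an $S$ the idea is to transfer a log canonical structure from $X$ to the normalization $S^{\nu}$ by adjunction. Using Kodaira's lemma, write $D-(K_{X}+\Delta)\sim_{\mbQ}A+E$ with $A$ an ample $\mbQ$-divisor and $E\geq 0$; then, exploiting the bigness of $D$ and the inclusion $S\subseteq\mathbb{B}_{+}(D)$, one replaces $\Delta$, after a careful perturbation, by an effective $\mbQ$-divisor $\Delta'$ for which $D\sim_{\mbQ}K_{X}+\Delta'$, the pair $(X,\Delta')$ is log canonical, and $S$ is a log canonical centre of $(X,\Delta')$ with coefficient one. Kollár's adjunction then produces a log canonical surface pair $(S^{\nu},\Delta_{S^{\nu}})$ with $D|_{S^{\nu}}\sim_{\mbQ}K_{S^{\nu}}+\Delta_{S^{\nu}}$, whence Tanaka's base point free theorem for log canonical surfaces over $\Fp$ gives that $D|_{S^{\nu}}$ is semiample; as $S^{\nu}\to S$ is finite, $D|_{S}$ is semiample as well.

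It remains to combine the semiample structures on the various components of $\mathbb{E}(D)$ into one on all of $\mathbb{E}(D)$; this gluing rests again on Keel's theorem, which applies to reducible and possibly non-reduced proper schemes, together with the torsion of numerically trivial line bundles over $\Fp$ (the place where $\Fp$, rather than an arbitrary field of characteristic $p$, is really needed). I expect the main obstacle to be the construction of $\Delta'$: one must produce an effective $\mbQ$-divisor $\mbQ$-linearly equivalent to $D-(K_{X}+\Delta)$ of precisely the right multiplicity along $S$ to make $S$ a log canonical centre while keeping $(X,\Delta')$ log canonical, and this has to work simultaneously for every two-dimensional component of $\mathbb{B}_{+}(D)$ --- including components lying in the locus where $(X,\Delta)$ is Kawamata log terminal, where a priori there is no boundary near $S$, and with $S$ possibly non-normal. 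Handling these perturbations, and the possibly non-big restricted surface pairs they yield, is the technical heart of the argument, and it is also where the bigness of $D$ enters essentially, since it is what reduces the problem from a threefold to a surface.
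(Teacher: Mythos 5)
You should first note that the paper under review does not prove this statement at all: it is quoted verbatim from \cite{MNW}*{Theorem 1.1}, so there is no in-paper proof to compare against, only the argument in that reference. With that caveat, your outline does follow the architecture of the actual proofs of Keel and of \cite{MNW}: reduce via Keel's theorem to the exceptional locus $\mathbb{E}(D)$, which is a proper closed subset of dimension at most two because $D$ is big; dispose of the one-dimensional components using that numerically trivial line bundles over $\Fp$ are torsion; handle the two-dimensional components by adjunction and the surface base point free theorem over $\Fp$ (Theorem \ref{theorem:MNWsurf}); and glue using Keel's results special to $\Fp$. All the signposts are in the right places, and you correctly identify where the hypothesis $k=\Fp$ and the bigness of $D$ enter.

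The genuine gap is the one you flag yourself: the construction of $\Delta'$. This is not a routine perturbation that can be deferred --- it is the entire content of the theorem, and as literally stated your requirement is too strong to be achievable in general. You ask for an effective $\Delta'$ with $D \sim_{\mbQ} K_X + \Delta'$, $(X,\Delta')$ log canonical, and every two-dimensional component $S$ of $\mathbb{E}(D)$ appearing with coefficient exactly one. The obvious tie-breaking (adding an effective divisor from Kodaira's lemma up to the log canonical threshold) can terminate before the coefficient of $S$ reaches one --- the threshold may be computed by a divisor exceptional over $X$, or by another centre --- and nothing forces the leftover positivity to be used up exactly, let alone for all components simultaneously. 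This is precisely why \cite{MNW} does not argue one component at a time in this way: their main technical statement (described in the introduction above) concerns pairs with coefficients at most one whose reduced boundary has \emph{normal} components, the two-dimensional pieces of $\mathbb{E}(D)$ contained in $\Supp\lfloor\Delta\rfloor$ are treated by honest divisorial adjunction to those normal components, the pieces meeting only the klt locus are treated by (a refinement of) Keel's original argument for $\lfloor\Delta\rfloor=0$, and the general log canonical case requires a further reduction. Two smaller points: descending semiampleness from $S^{\nu}$ to the possibly non-normal $S$, and gluing over the reducible, possibly non-reduced $\mathbb{E}(D)$, are both true over $\Fp$ by Keel's results, but they do not follow from Lemma \ref{lemma:surj} (which assumes the target normal) and must be cited in their precise form from \cite{Keel}. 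In short: right skeleton, but the load-bearing step is missing.
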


Furthermore, in \cite{MNW} the above assertion is proved also for three-dimensional projective 
(possibly non-log canonical) log pairs $(X, \Delta)$ such that all the coefficients of $\Delta$ are at most one and each irreducible component of 
$\Supp (\lfloor \Delta \rfloor)$ is normal. This is a generalization of Keel's result \cite{Keel}, who showed the same assertion under the stronger assumption that $\lfloor \Delta \rfloor = 0$. 

We also note that Theorem \ref{theorem:MNW} is not true for any other algebraically closed field
$k \not = \Fp$ even in the two-dimensional case (see \cite[Example A.3]{Tanaka}).

The main aim of this paper is to drop the assumption that $D$ is big in Theorem \ref{theorem:MNW},
when the characteristic is larger than five: 
\begin{theorem}[Main theorem]\label{theorem:main}
Let $(X, \Delta)$ be a three-dimensional projective log canonical pair defined over $\Fp$ 
of characteristic $p$ larger than five, 
and let $D$ be a nef $\mbQ$-Cartier $\mbQ$-divisor on $X$. 
If $D - (K_X + \Delta)$ is nef and big, then $D$ is semiample. 
\end{theorem}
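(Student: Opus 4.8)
The plan is to reduce the general nef case to the big case handled by Theorem \ref{theorem:MNW}, using the abundance-type geometry available over $\Fp$. Write $D - (K_X + \Delta) = A$, a nef and big $\QQ$-Cartier $\QQ$-divisor. If $D$ is big we are done by Theorem \ref{theorem:MNW}, so assume $\nu(D) =: k < 3$, where $\nu$ denotes the numerical dimension. The natural strategy is induction on the numerical dimension $k$: when $k = 0$, $D$ is numerically trivial, and over $\Fp$ a nef numerically trivial $\QQ$-Cartier $\QQ$-divisor on a projective variety is torsion in $\Pic$ (this is the classical fact, following from the finiteness of $\Pic^0$ over a finite field together with a spreading-out argument), hence semiample. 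For $0 < k < 3$, I would like to produce a fibration $f\colon X \to Z$ with $\dim Z = k$ such that $D \equiv f^* H$ for some nef and big $\QQ$-divisor $H$ on $Z$, and then transfer semiampleness of $H$ (obtained by induction or by the known two-dimensional and curve cases over $\Fp$) back to $X$.

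First I would run an MMP to improve the situation. Since $D - (K_X+\Delta)$ is nef and big, we may write $K_X + \Delta \equiv D - A \equiv (1+\epsilon)D - (\epsilon D + A)$ for small rational $\epsilon > 0$; here $\epsilon D + A$ is still nef and big, so it suffices to show $(1+\epsilon)D$, equivalently $D$, is semiample. The point of this reshuffling is to set up a $(K_X+\Delta)$-trivial MMP with scaling, or rather to invoke the theory from \cite{HX}, \cite{Birkar}, \cite{BW} in the form of a relative base point free theorem. Concretely, consider the $\QQ$-divisor $K_X + \Delta$: because $D$ is nef with $0 \leq \nu(D) < \dim X$, one expects $K_X + \Delta$ to be pseudoeffective or to have a Mori fibre space structure after an MMP; running the $(K_X + \Delta)$-MMP (which exists for lc threefolds in characteristic $p > 5$ by \cite{Birkar} and the references in the excerpt) either contracts $D$-trivial extremal rays — each such contraction is $D$-trivial because $D$ is nef and the ray is $(K_X + \Delta)$-negative while $D - (K_X + \Delta)$ is nef — until we reach a model on which $D$ becomes big (then apply Theorem \ref{theorem:MNW}) or a Mori fibre space $X' \to Z$ with $\dim Z < 3$ on which $D'$ is the pullback of a nef big class on $Z$. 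Semiampleness descends along $D$-trivial birational contractions and along the fibration by the standard argument (e.g.\ using that $X' \to Z$ has relatively trivial $D'$ and $Z$ is lower-dimensional over $\Fp$, where the surface and curve base point free theorems of Tanaka and Keel apply), and pulls back to the original $X$.

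The main obstacle I anticipate is the case analysis when the $(K_X+\Delta)$-MMP terminates in a Mori fibre space whose base $Z$ is a surface: one must check that $D' \equiv f^* H$ genuinely holds (not merely that $D'$ is $f$-trivial, which only gives this numerically on a general fibre), and that $H$ inherits a decomposition $H - (K_Z + \Delta_Z) \equiv (\text{nef and big})$ via a canonical bundle formula / adjunction for the fibration, so that the two-dimensional result over $\Fp$ (Theorem \ref{theorem:MNW} in dimension two, or Tanaka's surface base point free theorem combined with the $\Fp$-specific semiampleness criteria) applies. Subtleties with the lc (rather than klt) hypothesis — in particular non-normal or non-reduced behaviour of $\lfloor \Delta \rfloor$ along fibres — may force passing to the normalization of components of $\lfloor \Delta \rfloor$ and invoking the generalization of Theorem \ref{theorem:MNW} to pairs with normal boundary components mentioned after that theorem. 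A secondary technical point is ensuring termination and the existence of the relevant MMP steps for lc (not just klt) threefolds in characteristic $p > 5$; this is exactly where the hypothesis $p > 5$ and the cited results \cite{HX}, \cite{CTX}, \cite{Birkar}, \cite{BW} are needed, and I would lean on Keel's theorem (semiampleness $\Leftrightarrow$ "EWM" plus semiampleness on a suitable closed subset over $\Fp$) to glue the fibrewise information together.
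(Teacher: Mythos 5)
There is a genuine gap at the heart of your argument: the claim that every $(K_X+\Delta)$-negative extremal ray is automatically $D$-trivial ``because $D$ is nef and $D-(K_X+\Delta)$ is nef'' does not follow. From $(D-(K_X+\Delta))\cdot R\ge 0$ and $(K_X+\Delta)\cdot R<0$ you only get $D\cdot R\ge (K_X+\Delta)\cdot R$, i.e.\ nothing beyond $D\cdot R\ge 0$. Concretely, on $X=\mbP^1\times\mbP^2$ with $\Delta=0$ and $D=\mathrm{pr}_1^*\mcO(1)$, the hypotheses hold, $D$ is not big, and the $K_X$-negative ray spanned by $\mbP^1\times\{pt\}$ satisfies $D\cdot R=1>0$; a $K_X$-MMP may contract it, and $D$ does not descend. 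The paper's proof exists precisely to repair this: after passing to a $\mbQ$-factorial dlt model one runs not a $(K_X+\Delta)$-MMP but a $(D-\epsilon S)$-MMP with $S=\lfloor\Delta\rfloor$ and $0<\epsilon\le\tfrac17$, having rewritten $D-\epsilon S\sim_{\mbQ}K_X+\Theta$ for a klt $\Theta$ (absorbing the nef and big $D-(K_X+\Delta)$ into the boundary). The $D$-triviality of each contracted ray is then \emph{not} formal; it is Lemma \ref{lemma:crepant}, which combines the boundedness of lengths of extremal rays (\cite[Theorem 1.1 (ii)]{BW}), giving a generating curve $C$ with $(D-S)\cdot C\ge -6$ and hence $D\cdot C<\tfrac{6\epsilon}{1-\epsilon}\le 1$, with an integrality argument ($D\cdot C\in\mbZ$ via Proposition \ref{proposition:lift} and Cartier-ness of a pullback of $D$) to force $D\cdot C=0$. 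None of these ingredients --- the role of $\lfloor\Delta\rfloor$, the length bound, the integrality step --- appears in your proposal, and without them the MMP you run need not preserve $D$ at all.

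Two further points. First, your induction on numerical dimension presupposes a fibration $f\colon X\to Z$ with $D\equiv f^*H$ realizing $\nu(D)$; producing such a nef reduction is essentially as hard as the semiampleness you are trying to prove, and the canonical bundle formula you invoke on the base is highly problematic in characteristic $p$ (inseparable and wild fibrations). The paper avoids both: in the Mori fibre space case over a surface $Z$ it observes that the contraction is $S'$-positive (since $D'$ is $g$-trivial but $D'-\epsilon S'$ is $g$-negative), so a component $S_1'$ of $\lfloor\Delta\rfloor$ dominates $Z$; it then restricts $D$ to the corresponding normal dlt component $S_1$ on $X$, applies the surface base point free theorem over $\Fp$ (Theorem \ref{theorem:MNWsurf}) there, and descends via Lemma \ref{lemma:surj}. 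Second, in the case where the MMP ends with $D'-\epsilon S'$ nef, the paper needs the comparison of the semiample fibrations of $D'-\epsilon S'$ and $D'-(\epsilon-\delta)S'$ to show $S'$ is a pullback from the base, reducing either to the big case (Theorem \ref{theorem:MNW}) or to dimension $\le 2$; your sketch does not address how to conclude when the end product is a minimal model rather than a fibre space. The $k=0$ step of your induction (numerically trivial implies torsion over $\Fp$) is correct but is the only part that survives scrutiny.
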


\noindent
In the proof of Theorem \ref{theorem:main}, 
we run a $D$-crepant MMP and finish off by combining three types of base point free theorems: 
Theorem \ref{theorem:MNW}, Theorem \ref{theorem:MNWsurf}, and Theorem \ref{theorem:bpfBW}. 
The key ingredient for running this MMP is the minimal model program developed in \cite{Birkar} and \cite{BW}, 
and the boundedness of the lengths of extremal rays (\cite[Theorem 1.1 (ii)]{BW}). 
In order to use results from \cite{BW}, we need to assume that $p > 5$.

The ultimate goal of the research conducted here and in \cite{MNW} 
is to gain a better understanding of new phenomena happening over $\Fp$, 
and seek further results around the minimal model program which are valid only over these algebraically closed fields. 
In the spirit of this, it is natural to ask the following question.
\begin{question} \label{question:main}  Let $(X,\Delta)$ be a log canonical projective three-dimensional pair defined over $\Fp$. Assume that $-(K_X+\Delta)$ is nef and big. Is $X$ a Mori dream space?
\end{question} 

When $(X,\Delta)$ is klt and the base field is of characteristic zero, 
then it is well known that the answer to the above question is positive (\cite[Corollary 1.3.2]{BCHM}), 
but without the klt assumption this does not hold.

The aforementioned question is motivated by the validity of the log canonical base point free theorem in characteristic $p>5$ (Theorem \ref{theorem:main}). Under the assumption of Question \ref{question:main}, if $p>5$, then any nef line bundle on $X$ is semiample. Indeed, if $L$ is a nef line bundle, then
\[
L - (K_X + \Delta)
\]
is nef and big, and so $L$ is semiample by Theorem \ref{theorem:main}. 
In particular, $X$ satisfies one of the conditions in the definition of a Mori dream space 
(cf.\ Definition \ref{definition:mds}).

Unfortunately, the answer to the above question is in general negative, as we show in Theorem \ref{theorem:main_mds}. 

\medskip

We note that some results on the Minimal Model Program for log canonical threefolds defined over algebraically closed fields of characteristic larger 
than five have been recently established in \cite{Waldron}. Further, the abundance conjecture for log canonical fourfolds in characteristic zero with a big boundary has been recently obtained in \cite{H}.

\pagebreak
\subsection*{Notation and conventions}
\begin{itemize}
\item When we work over a normal variety $X$, we often identify a line bundle $L$ 
with the divisor corresponding to $L$. 
For example, we use the additive notation $L + A$ for a line bundle $L$ and a divisor $A$. 

\item Following the notation of \cite{Keel}, for a morphism $f \colon X \to Y$ and a line bundle $L$ on $Y$, 
we denote by $L |_X$ the pullback $f^* L$. 

\item We say that a line bundle $L$ on $X$ is \textit{semiample} 
when the linear system $|mL|$ is base point free for some integer $m$. 
When $L$ is semiample, the surjective map $f \colon X \to Y$, defined by $|mL|$ 
for a sufficiently large and divisible positive integer $m$, satisfies 
$f_* \mcO _X = \mcO _Y$. 
We call $f$ \textit{the semiample fibration associated to $L$}. 

\item A \textit{log pair} $(X, \Delta)$ is a normal variety $X$ 
together with an effective $\mbQ$-divisor $\Delta$ such that 
$K_X + \Delta$ is $\mbQ$-Cartier. 

For a proper birational morphism $f \colon X' \to X$ from a normal variety $X'$, 
we write 
\[
K_{X'} + \sum _i a_i E_i = f^* (K_X + \Delta), 
\]
where $E_i$ are prime divisors. 
We say that the pair $(X, \Delta)$ is \textit{log canonical} 
if $a_i \le 1$ for any proper birational morphism $f$. 
Further, we say that the pair $(X, \Delta)$ is \textit{Kawamata log terminal} (\textit{klt} for short)
if $a_i < 1$ for any proper birational morphism $f$. 

We call a log pair $(X, \Delta)$ 
\textit{divisorial log terminal} (\textit{dlt} for short) 
when there exists a log resolution $f \colon X' \to X$ such that 
$a_i < 1$ for any $f$-exceptional divisor $E_i$ on $X'$.
\item We denote by $\mathrm{Nef}(X)$ the cone of nef $\mbR$-divisors on $X$ up to numerical equivalence 
and by $\mathrm{Mov}(X)$ the cone of mobile $\mbR$-divisors on $X$ up to numerical equivalence. 
\item We denote by $\Pic^0(X)$ the identity component of the Picard scheme 
$\Pic(X)$ of a variety $X$ defined over a field $k$. 
We call $\mathrm{NS}(X) \coloneq \Pic(X)/\Pic^0(X)$ the Neron-Severi group. 
\end{itemize}

\section{Preliminaries}
In this section, we list some propositions about semiampleness of line bundles, which are necessary for the proof of 
Theorem \ref{theorem:main}.

First, on projective curves over $\Fp$, the nefness and the semiampleness are equivalent. 
\begin{lemma}\label{lemma:curve}
Let $D$ be a nef line bundle on a projective curve defined over $\Fp$. 
Then $D$ is semiample. 
\end{lemma}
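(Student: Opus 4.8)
The plan is to reduce to the classical statement that a line bundle of degree zero on a smooth projective curve over $\Fp$ is torsion in the Picard group, which is a standard consequence of the finiteness of $\mathrm{Pic}^0$ over a finite field.

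First I would reduce to the case of a smooth curve. Let $C$ be the given projective curve and let $\nu \colon \widetilde{C} \to C$ be its normalization. Since pulling back along a finite surjective morphism neither loses nor creates semiampleness — $\nu_* \mcO_{\widetilde C}$ is a coherent sheaf, so $H^0(C, mD) \to H^0(\widetilde C, \nu^* mD)$ has cokernel of bounded dimension, and base-point-freeness of $\nu^* mD$ for some $m$ together with the fact that $\nu$ is a homeomorphism implies base-point-freeness of $mD$ (more directly: $\nu^*$ of a semiample bundle is semiample, and conversely if $\nu^*(mD)$ is globally generated then so is $mD$ because $\nu$ is finite surjective and affine, so $\mcO_C \hookrightarrow \nu_*\nu^*\mcO_C$ splits off the structure sheaf after a suitable twist) — I may assume $C$ is smooth. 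Next, $D$ being nef on a curve just means $\deg D \ge 0$; if $\deg D > 0$ then $D$ is ample, hence semiample, so the only case requiring work is $\deg D = 0$.

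Now suppose $\deg D = 0$. Then $\mcO_C(D)$ defines a point of $\mathrm{Pic}^0(C)$. Over $\Fp = \overline{\mbF_p}$, the curve $C$ and the divisor $D$ are defined over some finite subfield $\mbF_q$, and $\mathrm{Pic}^0_{C/\mbF_q}(\mbF_q)$ is a finite group (it is the group of $\mbF_q$-points of an abelian variety over a finite field, hence finite). Therefore $\mcO_C(D)$ is a torsion element: $mD \sim 0$ for some positive integer $m$, so $|mD|$ is base point free and $D$ is semiample.

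I expect no serious obstacle here; the only mildly delicate point is the reduction to the smooth case, where one must be careful that semiampleness descends along the finite birational map $\nu$. If one prefers to avoid that reduction, one can argue directly on the possibly singular curve $C$: it is still defined over some $\mbF_q$, its Picard scheme $\mathrm{Pic}_{C/\mbF_q}$ has finite group of $\mbF_q$-rational points in each component of bounded degree — more precisely $\mathrm{Pic}^0_{C/\mbF_q}$ is a commutative algebraic group over a finite field, and the subgroup of $\mbF_q$-points of its components meeting any fixed bounded region is finite — and a nef line bundle of degree $0$ on each irreducible component lies in such a bounded region, forcing it to be torsion. Either way the conclusion is that some multiple of $D$ is trivial, which is stronger than semiampleness.
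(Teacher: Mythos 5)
Your proof is correct and rests on the same key fact the paper's one-line proof invokes: a numerically trivial line bundle on a projective variety over $\Fp$ is torsion, because everything descends to some finite field $\mathbb{F}_q$ and $\mathrm{Pic}^0$ has finitely many $\mathbb{F}_q$-points (this is Keel's Lemma 2.16, which the paper cites). The normalization step is harmless but unnecessary, since the torsion statement already holds for singular projective varieties; the only point you leave implicit is irreducibility of the curve, which is how it arises in the paper (as the normal target of a contraction) --- for a reducible curve ``nef'' only gives nonnegative degree on each component, and the mixed case would need an extra gluing or Keel-type argument rather than ``positive degree implies ample''.
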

\begin{proof}
This easily follows from the fact that a numerically trivial line bundle on 
a projective variety over $\Fp$ is always torsion (cf.\ \cite[Lemma 2.16]{Keel}). 
\end{proof}

On projective surfaces over $\Fp$, a nef and big line bundle is always semiample. 
\begin{theorem}[{\cite[Theorem 2.9]{Artin}}, cf.\ {\cite[Corollary 0.3]{Keel}}]\label{theorem:Artin}
Let $D$ be a nef and big line bundle on a projective surface defined over $\Fp$. 
Then $D$ is semiample. 
\end{theorem}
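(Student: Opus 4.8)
The plan is to contract the curves on which $D$ fails to be positive and thereby reduce to the case of an ample divisor. Let $C_1,\dots,C_n$ be the prime divisors $C$ on $X$ with $D\cdot C=0$. First I would note that there are only finitely many of them: by Kodaira's lemma $D\sim_{\mathbb{Q}}A+E$ with $A$ an ample $\mathbb{Q}$-divisor and $E$ effective, so if $D\cdot C=0$ and $C\not\subseteq\Supp E$ then $0=D\cdot C=A\cdot C+E\cdot C\ge A\cdot C>0$, a contradiction; hence every such $C$ is a component of $E$. Moreover, since $D^2>0$, the Hodge index theorem shows that the intersection matrix $(C_i\cdot C_j)$ is negative definite.

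By Artin's contraction theorem \cite{Artin}, the negative-definite configuration $C_1\cup\dots\cup C_n$ can be contracted: there is a projective birational morphism $f\colon X\to Y$ onto a normal projective surface whose exceptional locus is precisely $C_1\cup\dots\cup C_n$. Since the class of $D$ is orthogonal to every $f$-exceptional curve, the negative-definiteness of $(C_i\cdot C_j)$ implies that $D$ is numerically the pullback of a $\mathbb{Q}$-Cartier $\mathbb{Q}$-divisor $D_Y$ on $Y$; that is, $f^*D_Y\equiv D$.

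This is where the hypothesis $k=\Fp$ enters, and I expect it to be the crux of the argument: the divisor $D-f^*D_Y$ is numerically trivial, and a numerically trivial line bundle on a projective variety over $\Fp$ is torsion (the same fact underlying Lemma \ref{lemma:curve}, cf.\ \cite[Lemma 2.16]{Keel}), whence $D\sim_{\mathbb{Q}}f^*D_Y$. It therefore suffices to show that $D_Y$ is semiample. For any curve $C\subseteq Y$ with strict transform $\widetilde C$ on $X$, the projection formula gives $D_Y\cdot C=f^*D_Y\cdot\widetilde C=D\cdot\widetilde C>0$ (as $\widetilde C$ is none of the contracted curves, $D\cdot\widetilde C\neq0$, and $D$ is nef), while $D_Y^2=D^2>0$; hence $D_Y$ is ample by the Nakai--Moishezon criterion, so it is semiample, and so is $D\sim_{\mathbb{Q}}f^*D_Y$.

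Alternatively, one can avoid the explicit contraction: since $D$ is big, the locus $\mathbb{E}(D)$ on which $D$ is not big is a proper closed subset, $D$ restricts numerically trivially to it, hence semiamply by Lemma \ref{lemma:curve} (applied componentwise, using once more that numerical triviality over $\Fp$ gives torsion), and Keel's descent result \cite[Corollary 0.3]{Keel} then yields the semiampleness of $D$. In both routes the essential point is that over an arbitrary field numerical triviality need not imply torsion, so the reduction to the ample (respectively one-dimensional) case genuinely requires $k=\Fp$; indeed the statement is false over other fields.
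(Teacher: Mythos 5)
This statement is not proved in the paper at all: it is quoted from Artin \cite{Artin} (cf.\ Keel \cite{Keel}), so your sketch has to be measured against those cited proofs. Your main route contains a genuine gap, and it sits precisely at the point you identify as ``where the hypothesis $k=\Fp$ enters.'' The finiteness and negative-definiteness of the $D$-trivial curves are fine, but the assertion that Artin's contraction yields a \emph{projective} surface $Y$ together with a \emph{$\mbQ$-Cartier} class $D_Y$ satisfying $f^*D_Y\equiv D$ does not follow from negative-definiteness: over a general field the contraction exists only as a normal complete algebraic space, which need not be projective, and $f_*D$ need not be $\mbQ$-Cartier. Concretely, over $\mbC$ blow up $\mbP^2$ at $12$ very general points of a smooth cubic $C_0$ and set $D=4H-\sum_{i=1}^{12}E_i$. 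Then $D$ is nef and big, the unique $D$-trivial curve is the strict transform $\widetilde{C}_0$ with $\widetilde{C}_0^{\,2}=-3$ (negative definite), yet $D|_{\widetilde{C}_0}$ is a non-torsion degree-zero line bundle, so $D$ is not semiample; it follows that $f_*D$ is not $\mbQ$-Cartier and the contracted surface is not projective. Hence by the time one may legitimately write $f^*D_Y$ with $D_Y$ a $\mbQ$-Cartier divisor on a projective $Y$, essentially all the work has already been done, and applying ``numerically trivial over $\Fp$ implies torsion'' to $D-f^*D_Y$ comes too late. In Artin's actual argument the arithmetic input is applied to the \emph{local} Picard groups $\Pic(nZ)$ of infinitesimal neighbourhoods of the fundamental cycle of the configuration (every $\Fp$-point of a commutative algebraic group is torsion), and that is what forces $D$ to descend.

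Your alternative route is correct and is essentially how Keel deduces his Corollary 0.3: the exceptional locus $\mathbb{E}(D)$ is the union of the finitely many $D$-trivial curves, $D|_{\mathbb{E}(D)}$ is numerically trivial, hence torsion by \cite[Lemma 2.16]{Keel} (better applied to $\mathbb{E}(D)$ as a whole rather than componentwise, since semiampleness on a reducible curve does not glue from the components, whereas torsionness of the whole restriction is immediate), hence semiample, and \cite[Theorem 0.2]{Keel} concludes. Be aware, though, that this invokes the full strength of Keel's main theorem, so it is a reduction to the cited literature rather than an independent proof --- which is also all the paper itself does for this statement.
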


Further, on  projective surfaces over $\Fp$, the base point free theorem is known 
under much weaker assumptions than usual. 

\begin{theorem}[{\cite[Theorem 1.4]{MNW}}]\label{theorem:MNWsurf}
Let $D$ be a nef $\mbQ$-divisor on a normal projective surface $S$ defined over $\Fp$, 
and let $\Delta$ be an effective $\mbQ$-divisor on $S$. 
If $D - (K_S + \Delta)$ is also nef, then $D$ is semiample. 
\end{theorem}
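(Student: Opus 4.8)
The plan is to reduce to a smooth surface, settle the cases where $D$ is big or numerically trivial at once, and in the remaining case $D^2=0$, $D\not\equiv0$ to produce a fibration onto a curve and invoke Lemma~\ref{lemma:curve}.

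\emph{Reduction to the smooth case.} Let $\mu\colon\tilde S\to S$ be the minimal resolution, and set $\tilde D\coloneq\mu^*D$ and $\tilde\Delta\coloneq\mu^*(K_S+\Delta)-K_{\tilde S}$. Every $\mu$-exceptional curve has self-intersection at most $-2$, so intersecting $\tilde\Delta$ with these curves and applying the negativity lemma gives $\tilde\Delta\ge0$ (its exceptional coefficients may exceed $1$, which does no harm since we are not assuming $(S,\Delta)$ log canonical). As $\tilde D-(K_{\tilde S}+\tilde\Delta)=\mu^*\bigl(D-(K_S+\Delta)\bigr)$ and $\tilde D=\mu^*D$ are nef, and semiampleness of $\tilde D=\mu^*D$ descends to $D$ by the projection formula (using $\mu_*\mcO_{\tilde S}=\mcO_S$), we may assume $S$ is smooth. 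If $D^2>0$, then $D$ is nef and big, hence semiample by Theorem~\ref{theorem:Artin}. If $D\equiv0$, then $D$ is torsion, hence semiample. So assume henceforth that $D^2=0$ and $D\not\equiv0$, i.e.\ $\nu(S,D)=1$.

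\emph{The subcase $D\cdot K_S<0$.} Here $D-K_S=\bigl(D-(K_S+\Delta)\bigr)+\Delta$ is a sum of a nef and an effective divisor, hence pseudoeffective, so $0\le D\cdot(D-K_S)=-D\cdot K_S$; thus in fact $D\cdot K_S\le0$ always, and we now assume the strict inequality. By Riemann--Roch on $S$ and $D^2=0$, $\chi(S,mD)=\chi(\mcO_S)-\tfrac m2\,D\cdot K_S\to\infty$, while $K_S-mD$ meets the nonzero nef class $D$ negatively and is therefore not pseudoeffective, so $h^2(S,mD)=h^0(S,K_S-mD)=0$ for every $m\ge1$; hence $h^0(S,mD)\ge\chi(S,mD)\to\infty$, and we fix $m$ with $h^0(S,mD)\ge2$. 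Write $|mD|=|M|+F$ with $M$ the moving part, so $h^0(S,M)=h^0(S,mD)\ge2$. Then $M$ is nef; since $M\cdot D$ and $F\cdot D$ are nonnegative with sum $m D^2=0$ we get $M\cdot D=0$, and then $0\le M^2\le M\cdot(M+F)=m(M\cdot D)=0$, so $M^2=0$; also $M\not\equiv0$, because a numerically trivial divisor over $\Fp$ is torsion and so has at most one section. Since $|M|$ is mobile with $M^2=0$, its general members are disjoint, so $M$ is base point free. Finally, $D$ and $M$ span a subspace of $\mathrm{NS}(S)_{\mbR}$ on which the intersection form is identically zero, and such a subspace is at most one-dimensional by the Hodge index theorem; hence $D\equiv\lambda M$ for some $\lambda\in\QQ_{>0}$, so $D-\lambda M$ is numerically trivial, hence torsion, and $D$ is semiample.

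\emph{The subcase $D\cdot K_S=0$: the main obstacle.} Now $D$ is orthogonal to both $D$ and $D-K_S$. Writing the Zariski decomposition $D-K_S=P+N'$ of the pseudoeffective divisor $D-K_S$ gives $P\cdot D=0$; this forces $P^2=0$ (a nef class with positive self-intersection is big, and a big and nef class cannot be orthogonal to a nonzero nef class), and by the Hodge index theorem $D$ is numerically proportional to the nef divisor $P$, which has numerical dimension $\le1$; moreover $K_S^2\le0$, since $D\in K_S^{\perp}$ is nonzero and isotropic. The problem is thereby reduced to the semiampleness, over $\Fp$, of a nef $\QQ$-divisor of numerical dimension $\le1$ on a smooth projective surface with $K_S^2\le0$. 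This is the genuinely nonformal step: it is exactly where the Zariski-type counterexamples occur over other fields, and over $\Fp$ it is obtained from the classification of surfaces together with the abundance theorem in positive characteristic (\cite{Tanaka} in the logarithmic case, Bombieri--Mumford for the underlying classification), the decisive input being that $\Pic^0$ of a projective variety over $\Fp$ --- and in particular $\Pic^0$ of a curve --- is a torsion group. That torsion-ness forces the ruled or genus-one fibration predicted by the classification to exist, exhibits $D$ as a pull-back from its base up to numerical (hence torsion) equivalence, and lets one conclude via Lemma~\ref{lemma:curve}.
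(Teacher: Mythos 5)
This statement is imported into the paper from \cite[Theorem 1.4]{MNW} and no proof of it appears here, so there is no internal argument to measure yours against; I can only judge the proposal on its own terms. The reductions you do carry out are essentially correct: the passage to the minimal resolution (where $\tilde\Delta\ge 0$ because $K_{\tilde S}$ is $\mu$-nef and the negativity lemma applies --- note that the correct justification is $K_{\tilde S}\cdot E\ge 0$ for exceptional $E$, not that $E^2\le -2$, which can fail for non-rational exceptional curves), the big case via Theorem \ref{theorem:Artin}, the numerically trivial case via torsion-ness over $\Fp$, and the entire subcase $D\cdot K_S<0$ (Riemann--Roch, the vanishing of $h^2$, the analysis of the moving part, and the Hodge-index proportionality) are sound and cleanly executed.

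The genuine gap is the final subcase $D^2=0$, $D\not\equiv 0$, $D\cdot K_S=0$, which is where the whole content of the theorem lies, and there you do not give an argument. Two concrete objections. First, the Zariski-decomposition step is incomplete: from $D\cdot P=0$ and $P^2=0$ you may conclude $D\equiv\lambda P$ only when $P\not\equiv 0$, and even then $P$ is merely another nef class of numerical dimension one, so nothing is gained. Second, and more seriously, the statement you claim to have reduced to --- that every nef $\mbQ$-divisor of numerical dimension at most one on a smooth projective surface over $\Fp$ with $K_S^2\le 0$ is semiample --- discards the hypothesis on $\Delta$ altogether and is, in that generality, essentially Keel's open question on semiampleness of nef line bundles over $\Fp$ restricted to surfaces; it does not follow from the classification of surfaces together with abundance as stated. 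What the hypothesis actually provides in this subcase is $D\cdot\Delta=0$ and, by Hodge index applied to the nef divisor $L=D-(K_S+\Delta)$ (which satisfies $L\cdot D=0$, hence $L^2=0$), the numerical proportionality $K_S+\Delta\equiv(1-\mu)D$ for some $\mu\ge 0$, or $L\equiv 0$. Any correct proof must exploit this proportionality, treating separately $\mu<1$ (abundance for log surfaces over $\Fp$, with additional care because $\Delta$ may have coefficients exceeding one --- precisely the point of \cite[Theorem 1.4]{MNW} beyond the boundary case treated in \cite{Tanaka}), $\mu=1$, and $\mu>1$. As written, your last paragraph describes a strategy rather than supplying an argument, so the theorem is not proved.
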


The semiampleness is preserved under taking the pullback under a surjective morphism to a normal variety. 
\begin{lemma}[{cf.\ \cite[Lemma 2.10]{Keel}}]
\label{lemma:surj}
Let $f \colon Y \rightarrow X$ be a proper surjection between varieties defined over 
an algebraically closed field and let $L$ be a line bundle on $X$. 
Assume that $X$ is normal. Then $L$ is semiample if and only if $f^* L$ is semiample.
\end{lemma}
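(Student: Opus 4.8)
The forward implication needs no hypotheses on $X$ or on $f$: if $|mL|$ is base point free for some $m>0$, then $f^*\mathcal{O}_X(mL)=\mathcal{O}_Y(mf^*L)$ is globally generated, being the pullback of the globally generated sheaf $\mathcal{O}_X(mL)$, so $f^*L$ is semiample. For the converse the plan is to separate off the finite part of $f$: writing the Stein factorization as $Y\xrightarrow{g}X'\xrightarrow{h}X$, where $g_*\mathcal{O}_Y=\mathcal{O}_{X'}$ and $h$ is finite and surjective, it is enough to prove two assertions. (A) If $g_*\mathcal{O}_Y=\mathcal{O}_{X'}$ and $g^*M$ is semiample for a line bundle $M$ on $X'$, then $M$ is semiample. (B) If $h\colon X'\to X$ is finite surjective with $X$ normal and $h^*L$ is semiample, then $L$ is semiample. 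Indeed $f^*L=g^*(h^*L)$ is semiample, so (A) gives that $h^*L$ is semiample, and then (B) gives that $L$ is semiample.

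I would prove (A) as follows. After replacing $M$ by a large multiple we may assume $g^*M$ is base point free and that the associated morphism $\phi\colon Y\to W$ is the semiample fibration, so $\phi_*\mathcal{O}_Y=\mathcal{O}_W$ and $g^*M=\phi^*A$ for an ample line bundle $A$ on $W$. The fibres of $g$ are connected (as $g_*\mathcal{O}_Y=\mathcal{O}_{X'}$), and for any curve $C$ contained in a fibre of $g$ the projection formula gives $A\cdot\phi_*C=\phi^*A\cdot C=g^*M\cdot C=M\cdot g_*C=0$, so $\phi(C)$ is a point by ampleness of $A$; hence $\phi$ contracts each fibre of $g$ to a point. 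The rigidity lemma then provides a factorization $\phi=\psi\circ g$ with $\psi\colon X'\to W$ a morphism, and because $g$ is surjective with $g_*\mathcal{O}_Y=\mathcal{O}_{X'}$ the pullback $g^*$ is injective on Picard groups (projection formula again); therefore $\psi^*A\cong M$ and $M$ is semiample.

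For (B) I would pass to a Galois cover and peel off the inseparable part. Choose a finite normal extension $\Omega$ of $K(X)$ containing $K(X')$, let $X''$ be the normalization of $X'$ in $\Omega$ and $G=\mathrm{Aut}(\Omega/K(X))$; then $X''\to X'\to X$ is finite surjective, $X''\to X_i\coloneq X''/G$ is a $G$-Galois morphism of normal varieties, and $X_i\to X$ is a purely inseparable finite morphism. Since $h^*L$ pulls back to a semiample bundle on $X''$, which is the pullback along $X''\to X_i$ of $(X_i\to X)^*L$, it suffices to treat: (B1) a $G$-Galois morphism $\rho\colon Z'\to Z$ of normal varieties, where $\rho^*N$ semiample $\Rightarrow$ $N$ semiample; and (B2) a purely inseparable finite morphism $\pi\colon Z'\to Z$ with $Z$ normal, where $\pi^*N$ semiample $\Rightarrow$ $N$ semiample. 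Case (B2) is settled by the observation that a power of the absolute Frobenius $F_Z$ factors as $\pi\circ\tau$ for a morphism $\tau\colon Z\to Z'$, whence $\tau^*\pi^*N=(F_Z^e)^*N=N^{\otimes p^e}$ is semiample, so $N$ is semiample. For (B1), after replacing $N$ by a multiple so that $\rho^*N$ is base point free with semiample fibration $\mu\colon Z'\to V$ and $\rho^*N=\mu^*A_0$ with $A_0$ ample, I would use the canonical $G$-linearization of $\rho^*N$ to obtain a compatible $G$-action on $V$, form the quotient $\bar V=V/G$ with the induced $\bar\mu\colon Z=Z'/G\to\bar V$, descend a suitable power of the $G$-linearized ample line bundle $\bigotimes_{\sigma\in G}\sigma^*A_0$ to an ample line bundle $\bar A$ on $\bar V$, and compute that $\rho^*(\bar\mu^*\bar A)\cong\rho^*(N^{\otimes d})$ for some $d>0$. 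Since the kernel of $\rho^*$ on Picard groups is torsion (annihilated by $\deg\rho$ via the norm map on $\Pic$), some tensor power $(\bar\mu^*\bar A)^{\otimes r}$ is isomorphic to $N^{\otimes dr}$; as the former is semiample, so is $N$.

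The step I expect to be the main obstacle is (B), and in particular its positive-characteristic aspects: reducing a general finite morphism to a tower consisting of a Galois morphism followed by a purely inseparable one, and then carrying out the descent of the semiample fibration along the Galois piece (descending the ample bundle to the quotient, and controlling the torsion kernel of pullback on Picard groups). In characteristic zero (B2) is vacuous and the averaging in (B1) may be performed directly with the operator $\tfrac{1}{|G|}\sum_{\sigma\in G}\sigma^*$, so what is genuinely delicate here is the behaviour of line bundles under inseparable morphisms. (This is in essence Keel's argument, cf.\ \cite[Lemma 2.10]{Keel}.)
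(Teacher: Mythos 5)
Your argument is correct, and it matches the proof the paper relies on: the paper gives no proof of this lemma, simply citing Keel's Lemma 2.10, and your reduction via Stein factorization plus the rigidity lemma for the connected-fibre part, followed by splitting the finite part into a Galois piece (handled by descent of the fibration to the quotient) and a purely inseparable piece (handled by factoring a power of Frobenius), is precisely Keel's strategy. The steps you leave implicit --- that a power of the $G$-linearized ample bundle descends to $V/G$, and that the kernel of $\rho^*$ on Picard groups of normal varieties under a finite surjection is torsion --- are standard and do not constitute gaps.
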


The following  technical proposition, shown in \cite{BW}, 
is used in the proof of Lemma \ref{lemma:crepant} (see Remark \ref{remark}). 
\begin{proposition}[{\cite[Lemma 3.6]{BW}}]\label{proposition:lift}
Let $(X, \Delta)$ be a three-dimensional klt pair defined over an algebraically closed field 
of characteristic larger than five. 
Let $C$ be a curve on $X$ and let $W \to X$ be a log resolution of $(X, \Delta)$. 
Then there exists a curve $C'$ on $W$ such that the induced map $C' \to X$ 
is birational onto $C$. 
\end{proposition}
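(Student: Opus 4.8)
The plan is to reduce the assertion to the existence of a rational point on a single fibre of $\pi$, and then to identify that fibre concretely. If $C$ is not contained in the image $B := \pi(\Exc(\pi))$ of the exceptional locus, then $\pi$ is an isomorphism over the dense open subset $C \setminus B$ of $C$, and the strict transform $C' := \overline{\pi^{-1}(C \setminus B)}$ maps birationally onto $C$. So assume $C \subseteq B$. Let $\eta$ be the generic point of $C$ and set $K := k(\eta)$, the function field of $C$; since $K$ is a one-dimensional function field over the algebraically closed ground field, it is $C_1$, and in particular $\operatorname{Br}(K) = 0$ by Tsen's theorem. It suffices to produce a $K$-rational point $x$ of the scheme-theoretic fibre $\pi^{-1}(\eta)$: the closure $C' := \overline{\{x\}}$ in $W$ is then an irreducible curve with $\pi(C') = \overline{\{\eta\}} = C$ and $k(C') = k(x) = K = k(C)$, so $\pi|_{C'}\colon C' \to C$ is birational.

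To understand $\pi^{-1}(\eta)$, I would localise at $\eta$. Because $(X,\Delta)$ is a three-dimensional klt pair in characteristic $p > 5$, the variety $X$ has rational singularities (Hacon--Witaszek); localising, the two-dimensional normal local ring $\mathcal{O}_{X,\eta}$ has a rational singularity, and $W \times_X \Spec \mathcal{O}_{X,\eta} \to \Spec \mathcal{O}_{X,\eta}$ is a resolution of it whose exceptional fibre is exactly $\pi^{-1}(\eta)$ (this is a genuine resolution of a surface singularity, albeit with residue field $K$, which is not algebraically closed). From $\pi_*\mathcal{O}_W = \mathcal{O}_X$ (Zariski's main theorem, as $X$ is normal) together with $R^1\pi_*\mathcal{O}_W = 0$ (rationality), the theorem on formal functions shows $\pi^{-1}(\eta)$ is geometrically connected; and since $\pi^{-1}(C) \subsetneq W$ while $\pi$ is not an isomorphism near $\eta$, one gets $\dim \pi^{-1}(\eta) = 1$. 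Thus, after base change to $\overline{K}$, Artin's structure theory of rational surface singularities applies: the reduced geometric fibre $\pi^{-1}(\eta)_{\overline{K}}$ is a connected configuration of smooth rational curves whose dual graph is a tree.

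Finally, I would descend to a $K$-point using the action of $\operatorname{Gal}(K^{\mathrm{sep}}/K)$ on this tree (the topological configuration is unchanged under the purely inseparable extension $\overline{K}/K^{\mathrm{sep}}$). Every finite tree has a centre which is either a vertex or an edge and is preserved by every automorphism, so the Galois action fixes either a vertex or an edge. If it fixes a vertex, the corresponding exceptional component is a geometrically connected, geometrically rational curve defined over $K$; since $\operatorname{Br}(K) = 0$, such a curve is isomorphic to $\mathbb{P}^1_K$ and in particular has a $K$-point. If it fixes an edge, the corresponding node of two conjugate components is a closed point of $\pi^{-1}(\eta)$ fixed by $\operatorname{Gal}(K^{\mathrm{sep}}/K)$, hence a $K$-rational point. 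Either way we obtain the required point $x$, and with it the curve $C'$.

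I expect the delicate part to be this last step: making the descent over the imperfect field $K = k(C)$ rigorous, i.e. checking that Artin's tree structure is insensitive to passing to a non-closed residue field, that a Galois-stable vertex genuinely yields a curve over $K$ whose $\mathbb{P}^1$-forms are controlled by $\operatorname{Br}(K) = 0$, and that a Galois-stable edge yields a $K$-rational rather than a merely purely inseparable point. The other essential input — that klt threefolds in characteristic $p > 5$ have rational singularities — is exactly what pins down the hypothesis on $p$.
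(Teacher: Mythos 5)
First, note that the paper itself does not prove this proposition: it is imported verbatim as \cite[Lemma 3.6]{BW}, so the ``paper's own proof'' is a citation. Your overall strategy --- reduce to $C \subseteq \pi(\Exc(\pi))$, localise at the generic point $\eta$ of $C$, view $\pi^{-1}(\eta)$ as the exceptional fibre of a resolution of a two-dimensional rational singularity with residue field $K = k(C)$, and combine Tsen's theorem with a fixed-point argument on the dual graph to produce a point of the fibre with residue field exactly $K$ --- is essentially the strategy of the actual proof in \cite{BW}. The reduction ``a $K$-point of $\pi^{-1}(\eta)$ yields $C'$'' is correct, as is the observation that $K$ is $C_1$. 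One input is off, though: you do not need (and should not use) the rationality of klt \emph{threefold} singularities, which postdates both this paper and \cite{BW}; klt-ness localises, and two-dimensional klt singularities of excellent schemes are rational by arguments going back to Lipman (see also Koll\'ar's \emph{Singularities of the minimal model program}). That route is elementary and makes clear that $p>5$ is a blanket hypothesis here rather than an essential one.

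The genuine gap is exactly where you suspect it, and it is not cosmetic. ``After base change to $\overline{K}$, Artin's structure theory applies'' is not meaningful as written: the singularity is $\Spec \mcO_{X,\eta}$, a local ring whose \emph{residue} field is $K$ --- it is not a $K$-variety, so there is no singularity over $\overline{K}$ to resolve, and base-changing the fibre alone to $\overline{K}$ does not endow it with the structure of an exceptional locus. One must apply the structure theory over $K$ itself (Lipman's theory for rational singularities of two-dimensional excellent schemes): the components $E_i$ of the reduced fibre then satisfy $H^1(E_i,\mcO_{E_i})=0$, i.e.\ each is a conic over its field of constants $L_i = H^0(E_i,\mcO_{E_i})$, a finite and --- since $K$ is imperfect --- possibly \emph{inseparable} extension of $K$ (concretely, $L_i$ is the constant field of the Stein factorisation of $D_i \to C$ for the divisor $D_i \subset W$ whose generic fibre is $E_i$, and finite covers of curves in characteristic $p$ can be inseparable). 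Consequently the endgame fails as stated: if the distinguished component has $L_i \supsetneq K$ purely inseparable, it is geometrically connected and Galois-stable, yet every closed point on it has residue field containing $L_i$, so it carries no $K$-point; likewise a ``fixed edge'' may be a closed point with inseparable residue field. Showing that the fibre nevertheless has a closed point with residue field exactly $K$ --- not merely a point rational over some purely inseparable extension --- is precisely the content of the lemma, and it is not supplied by $\mathrm{Br}(K)=0$ together with a tree-centre argument alone.
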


Lastly, for convenience of the reader we state the base point free theorem established by Birkar and Waldron.
\begin{theorem}[{\cite[Theorem 1.2]{BW}}] \label{theorem:bpfBW} 
Let $(X,\Delta)$ be a three-dimensional projective klt pair defined over an algebraically closed field $k$ 
of characteristic larger than five. 
Assume that $D$ is a $\mbQ$-Cartier $\mbQ$-divisor such that $D$ is nef and 
$D-(K_X + \Delta)$ is nef and big. Then $D$ is semiample.
\end{theorem}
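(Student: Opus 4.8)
The plan is to run a minimal model program contracting the $D$-trivial extremal rays, and to reduce the problem either to lower dimension or to the already known big case. First, after replacing $D$ by a multiple it is Cartier, and writing $D-(K_X+\Delta)\sim_{\mbQ}H+E$ with $H$ an ample $\mbQ$-divisor and $E\geq 0$, for $0<\varepsilon\ll 1$ the pair $(X,\Delta+\varepsilon E)$ is still klt and
\[
D-(K_X+\Delta+\varepsilon E)=(1-\varepsilon)\bigl(D-(K_X+\Delta)\bigr)+\varepsilon H
\]
is ample; so, replacing $\Delta$ by $\Delta+\varepsilon E$, we may assume that $A\coloneq D-(K_X+\Delta)$ is ample. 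If $D$ is big, then Theorem \ref{theorem:bpfBW} is exactly the big case of the base point free theorem due to Birkar and Xu (\cite{Birkar}, \cite{Xu}), so from now on we assume that $D$ is not big.

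Next we would run a $D$-trivial MMP. The basic observation is that any curve $C$ with $D\cdot C=0$ satisfies $(K_X+\Delta)\cdot C=-A\cdot C<0$; since $D$ is nef, $F\coloneq D^{\perp}\cap\NE(X)$ is a nonzero face of $\NE(X)$ (nonzero because $D$ is not ample) and it is $(K_X+\Delta)$-negative. By the cone theorem and the boundedness of lengths of extremal rays for klt threefolds in characteristic $p>5$ (\cite{Birkar}, \cite[Theorem~1.1]{BW}), $F$ contains a $(K_X+\Delta)$-negative extremal ray $R$ with $D\cdot R=0$, which we contract, performing the flip if $R$ is flipping (flips exist and terminate for such threefolds, \cite{HX}, \cite{Birkar}, \cite{BW}). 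As the associated map $X\to X_1$ has relative Picard number one and $D$ is relatively trivial, $D$ descends to a nef $\mbQ$-Cartier divisor $D_1$ on $X_1$ (its strict transform, in the flipping case), and $D$ is semiample if and only if $D_1$ is, by Lemma \ref{lemma:surj}. Iterating and invoking termination of this $D$-trivial MMP, we reach a model $(X',\Delta')$ on which the transform $D'$ of $D$ is nef, is semiample if and only if $D$ is, and is not big (Iitaka dimension being preserved by the steps).

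We then analyse the output. Since $D'$ is not big it is not ample, so the process terminates with a Mori fibre contraction $\psi\colon X'\to Z$, $\dim Z\in\{0,1,2\}$, on which $D'$ is numerically trivial, and $D'\sim_{\mbQ}\psi^{*}D_Z$ for a nef $\mbQ$-Cartier $\mbQ$-divisor $D_Z$ on $Z$. Using the canonical bundle formula, $D_Z-(K_Z+\Delta_Z)$ is nef and big for a suitable klt boundary $\Delta_Z$ on $Z$ absorbing the moduli part, and by induction on the dimension $D_Z$ is semiample: for $\dim Z=2$ this is Tanaka's base point free theorem for surfaces in positive characteristic (\cite{Tanaka}); for $\dim Z=1$ it is elementary, since $D_Z$ then has positive degree or, if degree zero, $Z\cong\mbP^{1}$ and $D_Z\sim_{\mbQ}0$; and for $\dim Z=0$ one has $D'\equiv 0$ on the log Fano threefold $X'$, so $D'$ is torsion. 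By Lemma \ref{lemma:surj}, $D'=\psi^{*}D_Z$, hence $D$, is semiample.

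The main obstacle is controlling positivity along the $D$-trivial MMP and pinning down its output. The hypothesis ``$D-(K_X+\Delta)$ nef and big'' is not literally preserved: under a $D$-trivial flip the flipped curves are $(K_X+\Delta)$-positive and $D$-trivial, so $D-(K_X+\Delta)$ acquires negative intersection numbers. One must therefore carry a weaker invariant through the program — for instance that the transform of $A$ stays big and positive on the surviving $D$-trivial curves — use it both to guarantee that the process does not stall before reaching a fibration and to recover the nef-and-big datum on the base $Z$ via the canonical bundle formula. Establishing these points, together with the termination of the $D$-trivial MMP, is the technical heart of the argument.
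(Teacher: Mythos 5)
First, note that the paper does not actually prove this statement: Theorem \ref{theorem:bpfBW} is quoted verbatim from \cite[Theorem 1.2]{BW} as an external input to the proof of Theorem \ref{theorem:main}, so there is no internal proof to compare yours against. What you have written is an attempted reconstruction of the Birkar--Waldron argument, and it must be judged as such.

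As a proof, your proposal has genuine gaps, which you yourself flag in the final paragraph but do not close. Two of them are fatal as written. First, the $D$-trivial MMP: after a single flip the transform of $A=D-(K_X+\Delta)$ is no longer nef (it is negative on the flipped curves), so on later models a $D_i$-trivial curve need not be $(K_{X_i}+\Delta_i)$-negative, and you lose the very mechanism that produced a contractible extremal ray inside $D_i^{\perp}\cap\NE(X_i)$; without a substitute invariant you cannot guarantee that the process reaches a Mori fibre space rather than stalling with $D_i$ nef but neither ample nor big. Saying that ``one must carry a weaker invariant'' names the problem rather than solving it. Second, the descent to the base: you invoke the canonical bundle formula to produce a klt boundary $\Delta_Z$ with $D_Z-(K_Z+\Delta_Z)$ nef and big. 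In characteristic $p$ no canonical bundle formula with a controlled (let alone nef) moduli part is available --- this is a well-known open problem --- so this step has nothing to stand on, and it is precisely where the genuine proof has to argue differently to transfer positivity to $Z$. A smaller issue: in the case $\dim Z=0$ you assert that a numerically trivial $D'$ on a klt log Fano threefold is torsion; over a general algebraically closed field $k$ (not just $\Fp$) this requires proof. The overall shape of your strategy (reduce to $A$ ample, split off the big case, run a $D$-trivial MMP, induct on the base of the resulting fibration) is reasonable and close in spirit to \cite{BW}, but the technical heart is missing.
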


\section{Proof of the main theorem}

We start by showing an auxiliary lemma. For technical reasons, instead of simply assuming that $D$ is Cartier, 
we only assume that it is $\mbQ$-linearly equivalent to a Cartier divisor after taking the pullback under a birational morphism (see Remark \ref{remark}).

\begin{lemma} \label{lemma:crepant} 
Let $(X,\Theta)$ be a $\mbQ$-factorial three-dimensional projective klt pair 
defined over an algebraically closed field of characteristic larger than five. 
Let $D$ be a nef $\mbQ$-divisor, and let $S$ be an effective divisor. 
Fix $\epsilon \in \mbQ_{>0}$ such that $\epsilon \le \frac{1}{7}$.  
Further, assume that 
\begin{enumerate}
 \item $D - \epsilon S \sim_{\mbQ} K_X + \Theta$,
 \item $(1-\epsilon)S \le \Theta$, and
 \item there exists a birational morphism $\pi \colon X' \to X$ such that 
 $\pi^*D$ is $\mbQ$-linearly equivalent to a Cartier divisor. 
\end{enumerate}
Let $g \colon X \to V$ be a contraction of a $(D-\epsilon S)$-extremal ray $R$. 
Then there exists a $\mbQ$-Cartier $\mbQ$-divisor $D'$ on $V$ such that $D \sim_{\mbQ} g^*D'$.
\end{lemma}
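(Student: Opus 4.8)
The plan is to show first that $D \cdot R = 0$, and then to descend $D$ along $g$ by the contraction theorem.

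\emph{Step 1: $D \cdot R = 0$.} As $D$ is nef, $D \cdot R \ge 0$, so suppose towards a contradiction that $D \cdot R > 0$. Set $\Theta' \coloneq \Theta - (1-\epsilon)S$; by hypothesis~(2) this is effective, and since $\Theta' \le \Theta$ the pair $(X, \Theta')$ is again klt. From $D \sim_{\mbQ} (K_X + \Theta) + \epsilon S$ together with $(K_X + \Theta) \cdot R < 0 < D \cdot R$ we obtain $S \cdot R > 0$, hence
\[
(K_X + \Theta') \cdot R = (K_X + \Theta) \cdot R - (1-\epsilon)\,S \cdot R < 0,
\]
so that $R$ is a $(K_X + \Theta')$-negative extremal ray. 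By the boundedness of the lengths of extremal rays \cite[Theorem~1.1~(ii)]{BW}, there is a rational curve $C_0$ with $[C_0] \in R$ and $0 < -(K_X + \Theta') \cdot C_0 \le 2\dim X = 6$.

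Next I would use hypothesis~(3) and Proposition~\ref{proposition:lift} to show $D \cdot C_0 \in \mbZ$. Choose a log resolution $\mu \colon W \to X$ of $(X, \Theta)$ factoring through $\pi \colon X' \to X$ (which exists), and apply Proposition~\ref{proposition:lift} to lift $C_0$ to a curve $\widetilde C_0 \subset W$ mapping birationally onto $C_0$; by the projection formula $D \cdot C_0 = \mu^* D \cdot \widetilde C_0$. Since $\mu$ factors through $\pi$ and $\pi^* D$ is $\mbQ$-linearly equivalent to a Cartier divisor by~(3), so is $\mu^* D$, and hence $\mu^* D \cdot \widetilde C_0$ --- the degree of a Cartier divisor on the complete curve $\widetilde C_0$ --- is an integer. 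As $D \cdot C_0 > 0$ this forces $D \cdot C_0 \ge 1$. Now, using $(K_X + \Theta) \cdot C_0 < 0$ and the length bound,
\[
0 > (K_X + \Theta) \cdot C_0 = (K_X + \Theta') \cdot C_0 + (1-\epsilon)\,S \cdot C_0 \ge -6 + (1-\epsilon)\,S \cdot C_0,
\]
so $S \cdot C_0 < \tfrac{6}{1-\epsilon} \le 7$; but also $1 \le D \cdot C_0 = (K_X + \Theta) \cdot C_0 + \epsilon\,S \cdot C_0 < \epsilon\,S \cdot C_0$, giving $S \cdot C_0 > \tfrac{1}{\epsilon} \ge 7$ --- a contradiction. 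Hence $D \cdot R = 0$.

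\emph{Step 2: descent.} Since $R$ is a $(K_X + \Theta)$-negative extremal ray and $D$ is a $\mbQ$-Cartier $\mbQ$-divisor with $D \cdot R = 0$, the contraction theorem for threefolds in characteristic $p > 5$ (\cite{Birkar}, \cite[Theorem~1.1]{BW}) yields a $\mbQ$-Cartier $\mbQ$-divisor $D'$ on $V$ with $D \sim_{\mbQ} g^* D'$. When $g$ is a divisorial contraction with exceptional divisor $E$ this can be seen directly: $g^* g_* D = D + aE$ for some $a \in \mbQ$, and intersecting with a curve $C \subset E$ that generates $R$ forces $a = 0$ (as $D \cdot C = 0 \neq E \cdot C$), so $D = g^* g_* D$ with $g_* D$ being $\mbQ$-Cartier since $V$ is $\mbQ$-factorial; the flipping and Mori-fibre cases use the full strength of the cited results.

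The heart of the argument is Step~1, where all the hypotheses enter: (1) and (2) let us pass to the klt pair $(X, \Theta')$ against which $R$ stays negative, (3) combined with Proposition~\ref{proposition:lift} is precisely what forces the integrality $D \cdot C_0 \in \mbZ$ (without which a nef $\mbQ$-divisor could have arbitrarily small positive degree on $C_0$), and the value $2\dim X = 6$ of the length bound is what dictates the threshold $\epsilon \le \tfrac{1}{7}$. I expect the only genuine obstacle to be verifying that these inputs are strong enough to produce the opposing bounds $S \cdot C_0 < 7$ and $S \cdot C_0 > 7$; Step~2 is then a formal consequence of the established three-dimensional minimal model program.
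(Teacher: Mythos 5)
Your proposal is correct and follows essentially the same route as the paper: reduce to showing $D\cdot R=0$, pass to the klt pair $(X,\Theta')$ with $\Theta'=\Theta-(1-\epsilon)S$, invoke the length bound of \cite[Theorem 1.1 (ii)]{BW} to get a generating curve with $(K_X+\Theta')\cdot C\ge -6$, and use hypothesis (3) together with Proposition \ref{proposition:lift} to force $D\cdot C\in\mbZ$, with $\epsilon\le\frac{1}{7}$ making the numerics close. Your rearrangement of the inequalities (two opposing bounds on $S\cdot C_0$ via contradiction, rather than the paper's direct bound $D\cdot C<\frac{6\epsilon}{1-\epsilon}\le 1$) and your appeal to the contraction theorem in Step 2 (where the paper cites \cite[Theorem 3.7 (4)]{KollarMori}) are only cosmetic differences.
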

In particular, if $g$ is birational, then $D \sim_{\mbQ} g^* g_* D$. 
The main example of a situation when the assumptions of the lemma hold, 
is when for a dlt pair $(X,\Delta)$ we have: $S = \lfloor \Delta \rfloor$, 
$\Theta = \Delta - \epsilon S$, and $D = K_X + \Delta$ is Cartier. 
\begin{proof}[{Proof of Lemma \ref{lemma:crepant}}]
By a standard argument, it is enough to show that $D \equiv_{g} 0$, 
that is, $D \cdot R = 0$ (cf.\ \cite[Theorem 3.7 (4)]{KollarMori}). 
To this end, we note that the nefness of $D$ implies that $R$ is also a $(D-S)$-negative extremal ray. 
For $\Theta' \coloneq \Theta - (1-\epsilon)S$, we have that
\[
D - S \sim _{\mbQ} K_{X} + \Theta',
\]
and $(X,\Theta')$ is klt.

By the boundedness of the lengths of extremal rays of $K_X + \Theta'$ (\cite[Theorem 1.1 (ii)]{BW}), 
there exists a curve $C$ on $X$ which generates 
the $(K_{X} + \Theta ')$-negative extremal ray $R$ and satisfies 
\[
(D - S) \cdot C = (K_{X} + \Theta ') \cdot C \ge -6. 
\]
Therefore, we obtain
\begin{align*}
0 > (D - \epsilon S) \cdot C &= \epsilon (D - S) \cdot C + (1 - \epsilon)D \cdot C \\
&\ge -6 \epsilon + (1 - \epsilon)D \cdot C, 
\end{align*}
and hence $D \cdot C < \frac{6 \epsilon}{1-\epsilon} \leq 1$. 

Without loss of generality we can assume that 
$\pi \colon X' \to X$ is a log resolution of singularities of $(X,\Delta)$. 
By Proposition \ref{proposition:lift}, 
there exists a curve $C'$ on $X'$ such that 
the restriction $\pi|_{C'}$ is a birational map onto $C$. 
By the projection formula, we obtain
\[
D \cdot C = \pi^* D \cdot C', 
\]
and since $\pi^*D$ is $\mbQ$-linearly equivalent to a Cartier divisor, we have $D \cdot C \in \mbZ$. 
But $D \cdot C < 1$, and so the nefness of $D$ implies that $D \cdot C = 0$, 
which completes the proof. 
\end{proof}

Now, we can proceed with the proof of the main theorem.
\begin{proof}[Proof of Theorem \ref{theorem:main}]
First, we can assume that $D$ is Cartier. Indeed, $mD$ is nef and 
\[
mD - (K_X + \Delta) = (m-1)D + D -(K_X+\Delta)
\]
is nef and big for any natural number $m$, and so without loss of generality we can replace $D$ by $mD$. 
Furthermore, we may assume that $(X, \Delta)$ is $\mbQ$-factorial and dlt by applying a dlt-modification 
(\cite[Theorem 1.6]{Birkar}) and Lemma \ref{lemma:surj}. Set 
\[
L := D - (K_X + \Delta) \text{, and} \quad S := \lfloor \Delta \rfloor. 
\]

For a fixed $0 < \epsilon \le \frac{1}{7}$, we consider a $(D - \epsilon S)$-MMP. 
We note that 
\[
D - \epsilon S \sim _{\mbQ} (K_X + \Delta - \epsilon S) + L. 
\]
Since $(X, \Delta - \epsilon S)$ is klt, and $L$ is nef and big, 
there exists a $\mbQ$-divisor $\Theta$ such that
\begin{itemize}
\item  $(X, \Theta)$ is klt, 
\item $D - \epsilon S \sim _{\mbQ} K_X +  \Theta$, and 
\item $(1- \epsilon)S \le \Theta$.
\end{itemize} 
Hence, we may run a $(D - \epsilon S)$-MMP with scaling of some ample divisor 
and it terminates (\cite[Theorem 1.6]{BW}): 
\[
X=X_0 \overset{f_0}{\dasharrow} X_1 \overset{f_1}{\dasharrow} \cdots \overset{f_{l-1}}{\dasharrow} X_l =: Y. 
\]
Note that this is also a $(K_X + \Theta)$-MMP.

We set $D_i$, $S_i$, and $\Theta_i$ to be the strict transforms of $D$, $S$, and $\Theta$ on $X_i$, respectively. 
Then each $f_i$ is either a 
$(D_i - \epsilon S_i)$-negative divisorial contraction or a $(D_i - \epsilon S_i)$-flip. 

By subsequently applying Lemma \ref{lemma:crepant} to $X_i, \Theta_i, D_i, S_i$, for $0 \leq i < l$, 
and the corresponding contractions, we get that this $(K_X + \Theta)$-MMP is $D$-crepant, 
that is, there exists a resolution $W$ of the indeterminacy of $f \coloneq f_{l-1} \circ \cdots \circ f_0$: 
\[\xymatrix{
& W \ar[dl] _{p} \ar[dr] ^{q} & \\
X \ar@{-->}[rr] ^{f} & &Y
}\]
such that $p^*D \sim_{\mbQ} q^* f_* D$ holds. 
To this end, it is sufficient to show inductively for each $i$ that 
the conditions (1), (2), (3) of Lemma \ref{lemma:crepant} are satisfied and that $D_i$ is nef.
The first two hold since the containment of divisors and 
the $\mbQ$-linear equivalence are preserved under taking the pushforward under a divisorial contraction or a flip. 
Thus, we are left to show (3) and the nefness of $D_i$. 
Since these hold clearly for $i=0$, we can assume that $i>0$. 
Further, by induction, we can assume that $f'_{i-1} \coloneq f_{i-1} \circ \cdots \circ f_0$ is $D$-crepant. 
In particular, there exists a commutative diagram
\[\xymatrix{
& W_i \ar[dl] _{p_i} \ar[dr] ^{q_i} & \\
X \ar@{-->}[rr] ^{f'_{i-1}} & &X_i. 
}\]
such that $q_i^* D_i \sim_{\mbQ} p_i^* D$, where $p_i^*D$ is Cartier. Hence, (3) is satisfied, and $D_i$ is nef.

Let $f\colon X \dasharrow Y$ be the end result of this MMP and let $S'$ and $D'$ be the strict transforms of 
$S$ and $D$ on $Y$, respectively. 
Then they satisfy one of the following properties:
\begin{itemize}
\item[(A)] $D ' - \epsilon S'$ is nef, or
\item[(B)] there exists a $(D ' - \epsilon S')$-negative extremal contraction 
$g\colon Y \to Z$ with $\dim Z \le 2$. 
\end{itemize}

\noindent
Since $f$ is $D$-crepant, $D'$ is nef. 
Further, in order to prove the semiampleness of $D$, 
it is sufficient to show the semiampleness of $D'$ (cf.\ Lemma \ref{lemma:surj}). 

We divide the argument into the cases (A) and (B). 

\vspace{2mm}
\noindent
\underline{Case (A).}\ \ 
First, we prove that $D' - \epsilon S'$ is semiample. 
Let $\Theta'$ be the strict transform of $\Theta$ on $Y$. 
Then $(Y, \Theta ')$ is klt and 
\[
D ' - \epsilon S' \sim _{\mbQ} K_{Y} + \Theta '.
\]
Further, $\Theta '$ is big, as $\Theta$ is big, and thus we can write $\Theta ' = \Gamma + A$ 
with an effective $\mbQ$-divisor $\Gamma$ and an 
ample $\mbQ$-divisor $A$, so that $(Y, \Gamma)$ is klt. 
Since 
\[
D ' - \epsilon S' - (K_{Y} + \Gamma) \sim _{\mbQ} A
\]
is ample, 
the nef  $\mbQ$-divisor $D' - \epsilon S'$ turns out to be semiample 
by the base point free theorem for klt pairs (Theorem \ref{theorem:bpfBW}). 

Moreover,  $D' - (\epsilon - \delta) S'$ is semiample for a sufficiently small $\delta > 0$. 
Indeed, 
\[
D ' - (\epsilon - \delta) S' - (K_{Y} + \Gamma) \sim _{\mbQ} A + \delta S'
\]
is ample, when $\delta$ is small enough. 
Further, $D' - (\epsilon - \delta) S'$ is nef, as $D'$ and $D' - \epsilon S'$ are nef. 
Hence, the semiampleness of $D' - (\epsilon - \delta) S'$ follows from the base point free theorem (Theorem \ref{theorem:bpfBW}).
 
Let $h \colon Y \to V$ be the semiample fibration associated to $D' - (\epsilon - \delta) S'$ and 
let $H$ be an ample $\mbQ$-Cartier $\mbQ$-divisor on $V$ such that 
\[
D' - (\epsilon - \delta) S' \sim _{\mbQ} h^* H. 
\]

Let $C$ be a curve contracted by $h$. In particular, $\big( D' - (\epsilon - \delta) S' \big) \cdot C = 0$. 
Since $D'$ and $D' - \epsilon S'$ are nef, we get that $(D' - \epsilon S') \cdot C = 0$, and so
the semiample fibration associated to $D' - \epsilon S'$ factors through $h$. 
Hence, $D - \epsilon S'$ can be written as a pullback under $h$ of 
a $\mbQ$-Cartier $\mbQ$-divisor on $V$, which implies that $S'$ can be written as 
a pullback of an effective $\mbQ$-Cartier $\mbQ$-divisor.

Therefore, we may write 
\[
D' \sim _{\mbQ} h^* (H + E)
\]
with an ample $\mbQ$-Cartier $\mbQ$-divisor $H$ and an effective $\mbQ$-Cartier $\mbQ$-divisor $E$. 

If $\dim V \le 2$, then the nef and big $\mbQ$-divisor $H+E$ is semiample by Lemma \ref{lemma:curve} and 
Theorem \ref{theorem:Artin}. 
Hence, $D'$ is semiample, and so is $D$. 

If $\dim V = 3$, then $D'$ is big. Therefore, $D$ is also big, since $f$ is $D$-crepant. 
In this case, the assertion is nothing but Theorem \ref{theorem:MNW}. 

This completes the proof in the case (A). 

\vspace{2mm}
\noindent
\underline{Case (B).}\ \ 
First,  by Lemma \ref{lemma:crepant}, 
there exists a $\mbQ$-Cartier $\mbQ$-divisor $D_Z$ on $Z$ such that $D' \sim _{\mbQ} g^* D_Z$. 
In particular, $g$ is $S'$-positive. 

If $\dim Z \le 1$, the nef $\mbQ$-divisor $D_Z$ is semiample (Lemma \ref{lemma:curve}). 
Hence, $D'$ is semiample, and so is $D$. 

In what follows, we assume that $\dim Z = 2$. 
Since $g$ is $S'$-positive, some irreducible component $S' _1$ of $S'$ dominates $Z$. 
Let $S_1$ and $\overline{S}_1$ be the strict transforms of $S_1 '$ on $X$ and $W,$ respectively. 
\[\xymatrix{
\overline{S}_1 \ar@{}|{\rotatebox{0}{$\subset$}}[r] & W \ar[d] _{p} \ar[dr] ^{q} & & \\
S_1 \ar@{}|{\rotatebox{0}{$\subset$}}[r] & X \ar@{-->}[r] _f & Y \ar[r] _g  & Z. \\
&& S_1' \ar@{}|{\rotatebox{90}{$\subset$}}[u] \ar@{->>}[ur] &
}\]
Note that $S_1$ is normal, since $(X, \Delta)$ is dlt 
(\cite[Proposition 4.1]{HX}).
Hence, by Lemma \ref{lemma:surj}, in order to prove the semiampleness of $D_Z$, 
it is sufficient to show the semiampleness of $D_Z |_{\overline{S}_1}$. 
Since $D_Z |_{\overline{S}_1} \sim_{\mbQ} D |_{\overline{S}_1}$, 
it is sufficient to show the semiampleness of $D |_{S_1}$. 
By adjunction, the new pair $(S_1, \Delta _{S_1})$ defined by 
$K_{S_1} + \Delta _{S_1} = (K_X + \Delta)| _{S_1}$ is also dlt. 
Note that $D |_{S_1}$ and 
\[
D |_{S_1} - (K_{S_1} + \Delta _{S_1}) \sim _{\mbQ} \big( D - (K_X + \Delta) \big) |_{S_1}
\]
are nef. Therefore, the semiampleness of $D |_{S_1}$ follows from Theorem \ref{theorem:MNWsurf}. 
This shows that $D_Z$ is semiample, and so is $D$. 

The proof in the case (B) is completed. 
\end{proof}

\begin{remark}\label{remark}
The same argument as in Lemma \ref{lemma:crepant} appears in \cite{Kawamata}. 
One difference from \cite{Kawamata} is that we do not know whether $D_i$ is Cartier in our case. 
If we worked in characteristic zero, we could prove that $D_i$ is Cartier by the base point free theorem. 
The point is that the original base point free theorem in characteristic zero is stronger: 
for a nef line bundle $D$ on a klt pair $(X, \Delta)$ satisfying that $D - (K_X + \Delta)$ is nef and big, 
it is proved that $mD$ is base point free for sufficiently large $m$. 
In positive characteristic, we only know the base point freeness for a
sufficiently divisible $m$ (Theorem \ref{theorem:MNW} and Theorem \ref{theorem:bpfBW}). 
Proposition \ref{proposition:lift} was necessary to overcome this issue. 
\end{remark}

\section{Mori dream spaces over $\Fp$} \label{section:mds}
The main goal of this section is to show Theorem \ref{theorem:main_mds}, 
which gives a negative answer to Question \ref{question:main}. 

First, we recall the definition of a Mori dream space introduced by Hu and Keel (\cite{HK}). 
\begin{definition} \label{definition:mds} 
A normal projective variety $X$ is called a \emph{Mori dream space} 
if the following three conditions hold:
\begin{enumerate}
	\item $X$ is $\mbQ$-factorial and $\Pic(X)_{\mbQ} = \mathrm{NS}(X)_{\mbQ}$,
	\item $\mathrm{Nef}(X)$ is an affine hull of finitely many semiample line bundles, and
	\item there is a finite collection of small $\mbQ$-factorial modifications $f_i \colon X \dashrightarrow X_i$ such that each $X_i$ satisfies (2) and $\mathrm{Mov}(X)$ is the union of $f^*_i(\mathrm{Nef}(X_i))$.
\end{enumerate}
\end{definition}
\noindent
In this definition, a \emph{small $\mbQ$-factorial modification} of a projective variety $X$ 
is a birational contraction $f \colon X \to X'$, with $X'$ projective and $\mbQ$-factorial, such that $f$ is an isomorphism in codimension one.

\begin{remark} Note that there are many examples of Mori dream spaces which appear only over $\Fp$. For example, an elliptic curve $E$ defined over an algebraically closed field $k$ is a Mori dream space if and only if $k = \Fp$ for some prime number $p$.
\end{remark}

\subsection{Two-dimensional case}
Before proceeding with the proof of Theorem \ref{theorem:main_mds}, 
we discuss the situation in the two-dimensional case.

Assume that $(X,\Delta)$ is a two-dimensional projective log canonical pair defined over $\Fp$. 
When $-(K_X+\Delta)$ is nef and big, then $X$ is a Mori dream space by the following proposition. 

\begin{proposition} Let $X$ be a normal projective surface defined over $\Fp$ such that $-K_X$ is big. Then $X$ is a Mori dream space. 
\end{proposition}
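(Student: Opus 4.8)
The strategy is to verify the three conditions in Definition \ref{definition:mds} directly, using the surface base point free theorem (Theorem \ref{theorem:MNWsurf}) together with standard facts about surfaces over $\Fp$. First I would reduce to a convenient model: by replacing $X$ with a minimal resolution we may assume $X$ is smooth (a small $\mbQ$-factorial modification in dimension two is an isomorphism, so $X$ itself is already terminal/$\mbQ$-factorial after resolving, and being a Mori dream space can be checked upstairs since blowing up does not change Mov in the relevant way — more precisely, I would argue the resolution is itself a Mori dream space and then descend, or simply work directly on $X$ which is already $\mbQ$-factorial as a normal surface). In fact every normal projective surface is $\mbQ$-factorial in the sense that suffices here, and there are no nontrivial small modifications, so condition (3) reduces to $\mathrm{Mov}(X) = \mathrm{Nef}(X)$ being a finitely generated cone generated by semiample classes, i.e.\ conditions (2) and the Picard-number-finiteness of (1).

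\emph{Condition (1).} The equality $\Pic(X)_{\mbQ} = \mathrm{NS}(X)_{\mbQ}$, equivalently $\Pic^0(X)_{\mbQ} = 0$, I would obtain from the bigness of $-K_X$: a surface with $-K_X$ big has $h^1(X,\mcO_X) = 0$ (for instance via $\chi(\mcO_X) \geq 1$ arguments, or because such a surface is rationally connected, hence $\Pic^0 = 0$), so $\Pic^0(X)$ is a finite group scheme and $\mathrm{NS}(X)_{\mbQ} = \Pic(X)_{\mbQ}$. $\mbQ$-factoriality of the normal surface $X$ is automatic.

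\emph{Conditions (2) and (3).} The heart is to show $\mathrm{Nef}(X)$ is a rational polyhedral cone generated by semiample classes, and that $\mathrm{Mov}(X)$ coincides with it. Since $-K_X$ is big, I would write $-K_X \sim_{\mbQ} A + E$ with $A$ ample and $E$ effective, choose a small rational $\Delta$ supported on $E$ so that $-(K_X+\Delta)$ is still big (even ample up to the effective part), and run the cone theorem for the pair $(X,\Delta)$: $\overline{NE}(X)$ is rational polyhedral in the $(K_X+\Delta)$-negative half, and the $(K_X+\Delta)$-nonnegative part is contained in a cone where $-K_X$ is close to ample, which forces the whole $\overline{NE}(X)$ to be rational polyhedral. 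Dually $\mathrm{Nef}(X)$ is rational polyhedral. For each extremal nef class $D$, apply Theorem \ref{theorem:MNWsurf} with the chosen $\Delta$: $D$ is nef and $D - (K_X + \Delta) = D + A + E - (\text{small})$ is nef, hence $D$ is semiample. Thus $\mathrm{Nef}(X)$ is generated by finitely many semiample classes, giving (2). Finally, every movable class on a surface is nef (there are no small modifications, and a mobile divisor has nonnegative intersection with every curve), so $\mathrm{Mov}(X) = \mathrm{Nef}(X)$ and (3) holds trivially with the empty collection of modifications.

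\emph{Main obstacle.} The delicate point is the polyhedrality of $\mathrm{Nef}(X)$: the cone theorem gives rational polyhedrality only on the $K_X$-negative side, and one must argue that bigness of $-K_X$ (not nefness) still pins down the full cone — the standard trick of perturbing by an effective divisor and absorbing it into the boundary $\Delta$ is what makes this work, but care is needed to ensure $(X,\Delta)$ can be taken klt (or at least that the cone/base-point-free machinery applies) while keeping $-(K_X+\Delta)$ in the big cone. Once polyhedrality is in hand, semiampleness of each generator is immediate from Theorem \ref{theorem:MNWsurf}, which is the whole reason this phenomenon is special to $\Fp$.
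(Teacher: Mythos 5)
Your overall strategy matches the paper's, but there are two concrete gaps, both stemming from the same decision: you perturb by a \emph{small} boundary $\Delta$ supported on $E$, whereas the correct move is to take the \emph{full} effective part as the boundary. Writing $-K_X \sim_{\mbQ} A + E$ with $A$ ample and $E$ effective, the paper applies Tanaka's cone theorem to the pair $(X,E)$ itself: since $K_X + E \sim_{\mbQ} -A$ is anti-ample, \emph{every} curve class is $(K_X+E)$-negative, so the entire cone $\overline{NE}(X)$ lies in the polyhedral part and there is no ``nonnegative side'' left over. Your version, with $\Delta$ a small multiple of $E$, leaves exactly the $(K_X+\Delta)$-nonnegative part that you flag as the ``main obstacle,'' and your suggested resolution (``$-K_X$ is close to ample there'') does not go through: bigness of $-K_X$ gives no control on the nef-ness side of the cone. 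The same issue recurs in your semiampleness step: with small $\Delta$ you need $D - (K_X+\Delta) \sim_{\mbQ} D + A + (E - \Delta)$ to be nef, but $E - \Delta$ is merely effective, not nef, so Theorem \ref{theorem:MNWsurf} does not apply as stated. With $\Delta = E$ one gets $D - (K_X+E) \sim_{\mbQ} D + A$, which is ample, and the theorem applies cleanly. Note also that neither Tanaka's cone theorem nor Theorem \ref{theorem:MNWsurf} requires any klt hypothesis on the pair, so your worry on that front is moot.

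Two smaller points. Your claim that $\mbQ$-factoriality of a normal projective surface is ``automatic'' is false over a general algebraically closed field (cones over positive-genus curves give counterexamples); it holds here only because the surface is defined over $\Fp$, by \cite[Theorem 4.5]{Tanaka}, which the paper cites. Similarly, for $\Pic(X)_{\mbQ} = \mathrm{NS}(X)_{\mbQ}$ the paper does not argue via $h^1(\mcO_X)=0$ or rational connectedness (both delicate for singular surfaces in characteristic $p$); it simply uses that every numerically trivial line bundle on a projective variety over $\Fp$ is torsion. Both facts are special to $\Fp$ and should be invoked as such rather than treated as generalities.
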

\begin{proof}
First, by \cite[Theorem 4.5]{Tanaka}, a normal projective surface defined over $\Fp$ is always $\mbQ$-factorial. 
Further, a numerically trivial line bundle on a projective variety defined over $\Fp$ is always torsion  (cf.\ \cite[Lemma 2.16]{Keel}), 
and so $\Pic(X)_{\mbQ} = \mathrm{NS}(X)_{\mbQ}$. Thus, by definition, it is enough to verify that the nef cone of $X$ is a rational polyhedron and any nef line bundle is semiample. 

In order to prove the first assertion, write $-K_X \sim_{\mbQ} A + E$ for some ample $\mbQ$-divisor $A$ and a $\mbQ$-effective $\mbQ$-divisor $E$. Then, the cone of effective curves is a rational polyhedron by Tanaka's cone theorem (\cite[Theorem 3.13]{Tanaka}) applied to $K_X + E$, and so its dual cone, the nef cone $\mathrm{Nef}(X)$, is a rational polyhedron as well.   

As for the second assertion, take any nef divisor $D$ on $X$. Then 
\[ D - (K_X+E) \sim_{\mbQ} A+D
\]
is ample, and so $D$ is semiample by Theorem \ref{theorem:MNWsurf} (see also \cite[Theorem 4.15]{Tanaka}). This concludes the proof. 
\end{proof}

\begin{remark}
For any algebraically closed field $k$, it is known that 
a smooth projective rational surface with big anticanonical divisor $-K_X$ is a Mori dream space (\cite{TVV}). 
\end{remark}

Note that the answer to Question \ref{question:main} is negative even in dimension two, when $X$ is defined over an algebraically closed field different from $\Fp$.

\begin{proposition} 
For any algebraically closed field $k$ different from $\Fp$, 
there exists a log canonical projective two-dimensional pair $(X,\Delta)$ defined over $k$, 
such that $-(K_X+\Delta)$ is nef and big, but not every nef line bundle on $X$ is semiample. In particular, $X$ is not a Mori dream space.
\end{proposition}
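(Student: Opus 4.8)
The plan is to produce an explicit example, exploiting the fact that over a field $k \neq \Fp$ there exists a non-torsion numerically trivial line bundle on some smooth projective curve. First I would take an elliptic curve $E$ over $k$ carrying a degree-zero line bundle $N$ of infinite order in $\Pic^0(E)$; such a curve exists precisely because $k \neq \Fp$ (if $k$ has characteristic zero this is automatic, and if $k$ has positive characteristic but is not $\Fp$ then $k$ contains an element transcendental over the prime field and one can spread out an elliptic curve with a non-torsion point). The idea is then to build a ruled surface $\pi \colon X \to E$ over $E$ together with a section $C$ such that the restriction to $C$ of a suitable nef divisor is $N$ (or a twist of it), so that nefness does not force semiampleness: a divisor $D$ with $D|_C$ numerically trivial but non-torsion can never be semiample, since semiampleness would make $D|_C$ torsion by Lemma \ref{lemma:curve}'s underlying principle.

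Concretely, I would take $X = \mbP(\mcO_E \oplus N)$, the projectivization of a rank-two bundle, so that $X$ carries two disjoint sections $C_0$ and $C_\infty$ with $\mcO_X(C_0)|_{C_0} \cong N^{-1}$ and $\mcO_X(C_\infty)|_{C_\infty} \cong N$ (up to the standard conventions), and $C_0 \cdot C_\infty = 0$. Since $N$ is numerically trivial, $C_0$ and $C_\infty$ are both nef; one checks $D = C_\infty$ (or a small perturbation) is nef but $D|_{C_\infty}$ is non-torsion in $\Pic^0(C_\infty) = \Pic^0(E)$, hence $D$ is not semiample. It then remains to choose a boundary $\Delta$ supported away from $C_\infty$ — for instance $\Delta = 2C_0$ or a suitable effective combination of fibers and $C_0$ — so that $(X, \Delta)$ is log canonical (the pair has simple normal crossings support, so this is a coefficient bound) and $-(K_X + \Delta)$ is nef and big. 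Using the canonical bundle formula for a $\mbP^1$-bundle, $K_X \equiv -2C_0 + (\text{fiber class terms})$, one can solve for $\Delta$ making $-(K_X + \Delta)$ ample modulo the fibers, or at least nef and big, by adding enough fibers; the numerical trivial twisting by $N$ only affects things up to numerical equivalence, so the bigness computation reduces to the split case $\mbP(\mcO_E \oplus \mcO_E) = E \times \mbP^1$.

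The main obstacle I anticipate is the simultaneous fine-tuning: one needs $(X,\Delta)$ to be \emph{log canonical} (forcing $\lfloor\Delta\rfloor$ to have coefficients at most one and the pair to have mild singularities along $\Supp\Delta$), while keeping $-(K_X+\Delta)$ \emph{nef and big}, and \emph{simultaneously} keeping some nef divisor $D$ whose restriction to a curve is non-torsion. The tension is that making $-(K_X+\Delta)$ big pushes $\Delta$ to be small, but one must ensure $\Delta$ does not meet the curve $C_\infty$ carrying the bad restriction, and that $D$ remains genuinely nef (not just pseudoeffective) — this is where keeping $D = C_\infty$ with $C_\infty^2 = 0$ and $C_\infty \cdot (\text{fiber}) = 1$ is convenient, since nefness of $C_\infty$ is immediate from $C_\infty \cap C_0 = \emptyset$ and $N$ being numerically trivial. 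I would resolve this by working on $X = E \times \mbP^1$ up to numerical equivalence for the positivity computations, then transporting to the twisted bundle $\mbP(\mcO_E \oplus N)$ for the semiampleness obstruction, and finally writing down $\Delta$ as an explicit $\mbQ$-combination of the two sections and a few fibers, verifying log canonicity directly from the snc structure and verifying $-(K_X+\Delta)$ nef and big by intersecting against the extremal curves $C_0$, $C_\infty$, and a fiber.
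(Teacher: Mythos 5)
Your overall strategy --- a ruled surface over an elliptic curve carrying a numerically trivial non-torsion line bundle $N$, with a nef divisor whose restriction to a section is non-torsion --- is exactly the mechanism of the paper's proof (Cutkosky's example). However, your concrete choice of surface breaks the bigness requirement. On $X=\mbP(\mcO_E\oplus N)$ with $\deg N=0$ over an elliptic curve $E$, the canonical bundle formula gives $K_X\equiv -2C_0$ (the fiber-class term $\pi^*(K_E+\det\mathcal{E})$ is numerically trivial since $g(E)=1$ and $N\equiv 0$), and $C_0^2=0$, so $-K_X$ is nef with $(-K_X)^2=0$ and is therefore \emph{not} big. Since $\Delta$ must be effective, if $-(K_X+\Delta)$ were big then $-K_X=\bigl(-(K_X+\Delta)\bigr)+\Delta$ would be big as well, being the sum of a big and an effective divisor. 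Hence \emph{no} effective boundary $\Delta$ on your surface can make $-(K_X+\Delta)$ nef and big; your suggestion of ``adding enough fibers'' to $\Delta$ only makes $-(K_X+\Delta)$ less positive, and your reduction to the split case $E\times\mbP^1$ exhibits the same obstruction rather than resolving it. (Your proposed $\Delta=2C_0$ is additionally not log canonical.)

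The repair is to twist the bundle so that its determinant has nonzero degree: the paper takes $X=\mathrm{Proj}_C\bigl(\mcO_C(-P)\oplus\mcO_C(D)\bigr)$ with $D$ numerically trivial and non-torsion, for which $-K_X\equiv 2C_0+F$ where $F=\pi^*(P)$. Setting $\Delta=C_0$ (coefficient one, so the pair is log canonical along the smooth section) gives $-(K_X+\Delta)\equiv C_0+F$, which is nef and satisfies $(C_0+F)^2=1$, hence is big; and $(C_0+F)|_{C_0}=(\pi^*D)|_{C_0}$ is numerically trivial but non-torsion, so $C_0+F$ itself is the nef, non-semiample line bundle (any semiample fibration would have to contract $C_0$, forcing the restriction to be torsion). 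Note that this single divisor witnesses both required properties at once, whereas in your setup even the semiampleness obstruction ($D=C_\infty$) was decoupled from the pair condition that cannot be met.
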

\begin{proof}
The construction is based on Cutkosky's example (see \cite[Section 2.3.B]{Lazarsfeld1}). 
Take an elliptic curve $C$, a point $P \in C$, and a numerically trivial but non-torsion divisor $D$ 
(here we need the assumption $k \not = \Fp$). 
Let $X = \mathrm{Proj}_C(\mcO_C(-P) \oplus \mcO_C(D))$, let $\pi \colon X \to C$ be the natural projection, and let $F = \pi^*(P)$.

By \cite[Ch.\ V. Corollary 2.11]{Hartshorne}, we have $-K_X \equiv 2C_0 + F$, where $C_0$ is the canonical section. Take $\Delta = C_0$. Then $-(K_X + \Delta) \equiv C_0 + F$ is a nef divisor. Further, it is big, as $(C_0 + F)^2 = 1$. However, $C_0+F$ cannot be semiample, because if it was, then the associated semiample fibration would contract $C_0$, but $(C_0 + F)|_{C_0} = (\pi^*D)|_{C_0}$ is non-torsion. Hence, $X$ is not a Mori dream space. 
\end{proof}

\subsection{Three-dimensional case}

In this subsection, we give a negative answer to Question \ref{question:main}: 
\begin{theorem} \label{theorem:main_mds} 
For any algebraically closed field $k$, there exists a log canonical projective three-dimensional pair $(X,\Delta)$ 
defined over $k$, such that $-(K_X+\Delta)$ is nef and big, 
but the nef cone $\mathrm{Nef}(X)$ is not a rational polyhedron. 
In particular, $X$ is not a Mori dream space.
\end{theorem}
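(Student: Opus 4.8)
The plan is to produce a three-dimensional log canonical pair whose nef cone has infinitely many extremal rays by taking a projective bundle over a suitable surface. The natural candidate is to pull back the two-dimensional Cutkosky-type construction: let $C$ be an elliptic curve and $D_0$ a numerically trivial non-torsion divisor on $C$ when $k \neq \Fp$, and let $S = \mathrm{Proj}_C(\mcO_C(-P) \oplus \mcO_C(D_0))$ be the ruled surface from the previous proposition, so that $S$ carries a nef and big divisor that is not semiample. One then forms a $\mbP^1$-bundle $X = \mathrm{Proj}_S(\mcO_S \oplus \mcO_S(-B))$ over $S$ for an appropriate divisor $B$, takes $\Delta$ to be a suitable boundary (e.g.\ the sum of a section and the pullback of the boundary on $S$), and arranges that $-(K_X + \Delta)$ is nef and big while the nef cone of $X$ contains, via the section $S \hookrightarrow X$ and the restriction map on Néron–Severi groups, a copy of a cone controlled by the effective cone of $S$, which is not rational polyhedral because of the non-torsion divisor on the elliptic curve. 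For the case $k = \Fp$, where every numerically trivial bundle is torsion, one must instead use a surface $S$ whose effective or nef cone is genuinely non-polyhedral for a different reason — for instance an abelian surface, or a blow-up of $\mbP^2$ at many points, or more simply a surface isogenous to a product with infinite automorphism group acting on a rank-$\geq 3$ Néron–Severi lattice with an irrational quadratic form; the cone theorem fails to make such a nef cone polyhedral, and one again promotes this to a threefold by a projective bundle.

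The key steps, in order, are: (1) fix the surface $S$ (depending on whether $k = \Fp$) together with a nef and big anticanonical-type class, exhibiting explicitly that $\mathrm{Nef}(S)$ or a relevant sub-cone is not rational polyhedral; (2) choose the projective bundle $X \to S$ and the divisor defining it so that the relative structure is compatible with log canonicity and with bigness of $-(K_X+\Delta)$ — here one uses the formula for $K_X$ of a $\mbP^1$-bundle plus adjunction for the section; (3) verify $-(K_X+\Delta)$ is nef and big by writing it as an ample class plus an effective class, reducing to a computation on $S$ and on the fibers; (4) identify a face or a linear slice of $\mathrm{Nef}(X)$ with a non-polyhedral cone coming from $S$, using the retraction $X \dashrightarrow S$ and the embedded section, so that non-polyhedrality of $\mathrm{Nef}(S)$ forces non-polyhedrality of $\mathrm{Nef}(X)$; (5) conclude $X$ is not a Mori dream space, since condition (2) of Definition \ref{definition:mds} fails.

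The main obstacle is step (1) in the case $k = \Fp$: over the algebraic closure of a finite field every numerically trivial line bundle is torsion, so the Cutkosky mechanism is unavailable, and one genuinely needs a surface over $\Fp$ with a non-rational-polyhedral nef cone together with a big anticanonical divisor — these two requirements pull in opposite directions, since big $-K_X$ tends to force rational polyhedrality of $\mathrm{Nef}(X)$ for surfaces. The resolution is that we do not need big $-K_S$ on the surface itself; we only need big $-(K_X+\Delta)$ on the threefold, and the extra $\mbP^1$-direction gives enough room to add positivity while the section $S$, sitting inside $X$ with a controlled normal bundle, transmits its infinitely many nef extremal rays up to $X$. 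Making this precise — in particular checking that the extremal rays of $\mathrm{Nef}(S)$ really do remain extremal in $\mathrm{Nef}(X)$ and are not "rounded off" by the fibration — is the technical heart of the argument, and I expect it to occupy most of the proof.
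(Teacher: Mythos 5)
Your overall architecture (take a surface with a bad nef cone, promote it to a threefold via a $\mbP^1$-bundle with a section as the log canonical boundary, and detect the bad cone inside $\mathrm{Nef}(X)$ as the slice of classes killing the fibre class) is exactly the paper's, and step (4) of your plan is carried out there essentially as you describe. However, there is a genuine gap in your step (1), and it is the heart of the matter. The Cutkosky-type surface $S = \mathrm{Proj}_C(\mcO_C(-P)\oplus\mcO_C(D_0))$ with $D_0$ numerically trivial and non-torsion does \emph{not} have a non-rational-polyhedral nef cone: its N\'eron--Severi group has rank two and is numerically indistinguishable from that of the split bundle $\mcO_C(-P)\oplus\mcO_C$, so $\mathrm{Nef}(S)$ is spanned by two rational rays. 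The Cutkosky mechanism produces a nef and big divisor that is not \emph{semiample} --- a violation of condition (2) of Definition \ref{definition:mds} of a different nature --- whereas the statement you are proving asserts specifically that $\mathrm{Nef}(X)$ is not a rational polyhedron. So for $k\neq\Fp$ your chosen surface cannot feed the rest of your argument; and for $k=\Fp$ you only list candidate surfaces (abelian surfaces, blow-ups of $\mbP^2$, etc.) without establishing that any of them has a non-rational-polyhedral nef cone over $\Fp$, which you yourself identify as the main obstacle.

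The paper closes this gap uniformly in $k$ with a single mechanism: over every algebraically closed field there is a K3 surface with infinite automorphism group, and Proposition \ref{proposition:k3_automorphisms} shows that if $\mathrm{Nef}(S)$ were a rational polyhedron then $\mathrm{Aut}(S)$ would act through a finite symmetric group with finite kernel, hence be finite; so such an $S$ has non-rational-polyhedral nef cone (Proposition \ref{proposition:k3}). Using a K3 surface also dissolves your worry about reconciling positivity of $-(K_X+\Delta)$ with non-polyhedrality of $\mathrm{Nef}(S)$: since $K_S=0$, the blow-up $\overline{X}$ of the projective cone over a projectively normal embedding of $S$, with $\Delta=E$ the exceptional divisor, satisfies $-(K_{\overline{X}}+E)$ nef and big by a direct computation (the paper cites Gongyo), and $\{D\in\mathrm{Nef}(\overline{X}) \mid D\cdot F=0\}\cong\mathrm{Nef}(S)$ because $\Pic(\overline{X})=\mathrm{NS}(\overline{X})$ is generated by $E$ and pullbacks from $S$. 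You would need to supply a surface playing this role before the remaining steps of your plan can be executed.
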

The example is constructed by taking a blow-up of a projective cone of a K3 surface which is not a Mori dream space.
We would like to express our gratitude to Professors Yoshinori Gongyo and Keiji Oguiso for 
their suggesting the way of constructing the example.
\begin{proposition} \label{proposition:k3} 
For any algebraically closed field $k$, there exists a K3 surface $S$ defined over $k$ 
whose nef cone $\mathrm{Nef}(S)$ is not a rational polyhedron. In particular, $S$ is not a Mori dream space.
\end{proposition}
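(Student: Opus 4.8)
The plan is to exhibit, over any algebraically closed field $k$, a K3 surface whose automorphism group is infinite (or, more precisely, whose Picard lattice has infinite orthogonal group acting on it), since the nef cone of a K3 surface is a rational polyhedron essentially if and only if its automorphism group is finite. The cleanest route is to produce a K3 surface $S$ with $\Pic(S)$ of rank $2$ and intersection form a hyperbolic lattice of the shape $\left(\begin{smallmatrix} 2d & 1 \\ 1 & 0\end{smallmatrix}\right)$, or more simply with $\Pic(S) \cong \mbZ^2$ equipped with a form of discriminant that admits infinitely many automorphisms: such a lattice has infinitely many isometries, hence $\Aut(S)$ is infinite by the Torelli-type theorem, and since the ample cone is a fundamental domain for this action acting on the positive cone, the nef cone cannot be a rational polyhedron.

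Concretely, first I would fix a rank-two even hyperbolic lattice $\Lambda$ (for instance $U$, the hyperbolic plane, or $\left(\begin{smallmatrix} -2 & 0 \\ 0 & 2d\end{smallmatrix}\right)$ for suitable $d$) whose group of isometries preserving the positive cone is infinite — this is elementary lattice theory, since the relevant Pell-type equation has infinitely many solutions. Next I would invoke the existence of a K3 surface $S$ over $k$ with $\Pic(S) \cong \Lambda$: in characteristic zero this is classical via the surjectivity of the period map, and in positive characteristic one can either appeal to known lifting/moduli results for K3 surfaces of low Picard rank, or, to keep the argument uniform, take an explicit family (e.g.\ a generic member of a suitable family of quartics or of elliptic K3 surfaces) and compute its Picard lattice. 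Then I would observe that the walls of $\mathrm{Nef}(S)$ inside the positive cone correspond to $(-2)$-curves, and that the infinite isometry group permutes these walls, so there are infinitely many of them; a rational polyhedral cone has finitely many walls, giving a contradiction.

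The main obstacle will be the positive-characteristic case: over $\Fp$ (and its algebraic closure) the Picard rank of a K3 surface is always even and, crucially, one cannot have rank exactly $2$ with the lattice of one's choosing in an arbitrary characteristic — the Tate conjecture (known for K3 surfaces) forces supersingular behavior in some characteristics, and even for finite-height K3s the available Picard lattices are constrained. So the proof must either (i) restrict to a cleverly chosen lattice $\Lambda$ that is realizable in every characteristic — for example one arising from an explicit elliptic K3 surface with a section and a suitable singular fiber configuration, whose Néron–Severi lattice can be computed by the Shioda–Tate formula independently of $k$ — or (ii) allow $\Pic(S)$ to have rank larger than $2$ but still arrange that $O(\Pic(S))$ acting on the positive cone is infinite with infinitely many $(-2)$-walls. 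Option (i) seems most robust: I would take an elliptic K3 with Mordell–Weil rank $\ge 1$ and a trivial-lattice contribution chosen so that the resulting hyperbolic Néron–Severi lattice has an infinite-order automorphism (equivalently, an infinite-order element of the Mordell–Weil group inducing a translation automorphism of infinite order on $S$). The existence of such an elliptic K3 over any algebraically closed field is the key input, and verifying it — together with checking that translation by a non-torsion section really does force $\mathrm{Nef}(S)$ to be non-polyhedral — is where the real work lies.
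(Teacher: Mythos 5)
Your overall strategy---exhibit a K3 surface with an infinite automorphism group (equivalently, enough symmetry of its Picard lattice) and deduce that its nef cone cannot be a rational polyhedron---is the same as the paper's. The paper, however, splits the argument into two clean pieces: (i) the existence over any algebraically closed field of a K3 surface with infinite automorphism group is simply \emph{cited} (Kummer surfaces in characteristic $p\neq 2$ after Ueno, and a supersingular K3 in characteristic $2$ after Dolgachev--Kond{\=o}); (ii) Proposition \ref{proposition:k3_automorphisms}: if $\mathrm{Nef}(S)$ is a rational polyhedron then $\Aut(S)$ is finite, proved elementarily by letting $\Aut(S)$ act on the finitely many extremal rays and noting that the kernel fixes a very ample class, hence sits discretely inside some $\mathrm{PGL}_{N+1}(k)$ and is therefore finite.

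Measured against this, your proposal has two genuine gaps. First, the existence input is precisely the part you defer (``where the real work lies''): realizing a prescribed rank-two hyperbolic lattice, or an elliptic K3 with a non-torsion section, over \emph{every} algebraically closed field---including characteristics $2$ and $3$, and including $\Fp$, where the Picard rank is even and the Tate conjecture constrains the possible lattices---is not routine, and you do not carry it out; the paper sidesteps it entirely by citation. Second, you argue in the hard direction: passing from infinitely many isometries of $\Pic(S)$ to infinitely many automorphisms of $S$ via ``the Torelli-type theorem'' is a deep input in positive characteristic (and arbitrary lattice isometries need not preserve the ample cone or the effectivity of $(-2)$-classes, so your ``infinitely many walls'' step also needs justification). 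What is actually needed is only the implication ``polyhedral nef cone $\Rightarrow$ finite $\Aut(S)$'', which has the elementary $\mathrm{PGL}$ argument above and requires no Torelli theorem. Your elliptic-fibration variant is the most promising repair, since translation by a non-torsion section gives an honest infinite-order automorphism without Torelli, after which Proposition \ref{proposition:k3_automorphisms} finishes the job---but the uniform existence of such a fibration over every $k$ still has to be supplied.
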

For any algebraically closed field $k$, it is known that there exists a K3 surface $S$ defined over $k$ 
which has an infinite automorphism group 
(see \cite[Corollary]{Ueno} and \cite[Theorem 1.1]{DK}, for instance). 
Therefore, Proposition \ref{proposition:k3} follows from the following proposition, 
whose proof was suggested to us by K.\ Oguiso. 
\begin{proposition}\label{proposition:k3_automorphisms} 
Let $S$ be a K3 surface defined over an algebraically closed field $k$. 
If the nef cone $\mathrm{Nef}(S)$ of $S$ is a rational polyhedron, then the automorphism group $\mathrm{Aut}(S)$ is finite.
\end{proposition}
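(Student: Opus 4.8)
The plan is to argue by contraposition, deriving the finiteness of $\Aut(S)$ from the structure of the action on the nef cone. First I would recall the standard facts: $\Aut(S)$ acts on the Néron--Severi lattice $\mathrm{NS}(S)$ preserving the intersection form, hence it acts on $\mathrm{Nef}(S)$; and the kernel of this action $\Aut(S) \to O(\mathrm{NS}(S))$ is finite, since an automorphism acting trivially on $\mathrm{NS}(S)$ fixes an ample class and thus, being an automorphism of a polarized surface, lies in a finite group (for K3 surfaces this kernel is in fact trivial, but finiteness is all we need). Therefore it suffices to show that the image $G$ of $\Aut(S)$ in $O(\mathrm{NS}(S))$ is finite.

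Next I would use the hypothesis that $\mathrm{Nef}(S)$ is a rational polyhedral cone. Since $G$ acts on $\mathrm{NS}(S)$ preserving $\mathrm{Nef}(S)$, it permutes the finitely many extremal rays of this cone; this gives a homomorphism from $G$ to the finite symmetric group on those rays, and it remains to bound the kernel $G_0$ of that permutation action. An element of $G_0$ fixes every extremal ray of $\mathrm{Nef}(S)$, so it fixes a spanning set of $\mathrm{NS}(S)_{\mbR}$ up to positive scalars on each ray; since $G_0 \subset O(\mathrm{NS}(S))$ preserves lengths, it must actually fix each extremal ray pointwise, hence fix $\mathrm{NS}(S)_{\mbR}$ pointwise (the extremal rays of a full-dimensional cone span the space), so $G_0$ is trivial. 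Combining, $G$ is finite, and therefore so is $\Aut(S)$.

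I expect the main obstacle to be dispensing with the finiteness of the kernel of $\Aut(S) \to O(\mathrm{NS}(S))$ cleanly, in a way that works over an arbitrary algebraically closed field $k$: in characteristic zero one invokes that an automorphism fixing an ample polarization and acting trivially on cohomology is the identity, but in positive characteristic one needs instead the general fact that the automorphism group of a polarized variety is of finite type, and that a group scheme of finite type acting faithfully on a K3 surface (which has trivial tangent sheaf-type obstructions, or more directly $H^0(S, T_S) = 0$) is finite and reduced. The rest of the argument — the combinatorics of the action on a rational polyhedral cone and the length-preservation argument forcing rays to be fixed pointwise — is elementary. I would also remark that the converse direction, while not needed for Proposition \ref{proposition:k3}, follows from the cone conjecture for K3 surfaces and is where the asymmetry of the statement comes from.
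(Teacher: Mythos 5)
Your overall strategy coincides with the paper's: reduce to the permutation action of $\Aut(S)$ on the finitely many extremal rays of the rational polyhedral cone $\mathrm{Nef}(S)$, and show the kernel of this action is finite by noting that it preserves a very ample polarization, hence lies in a group of finite type (the paper uses $\mathrm{PGL}_{N+1}(k)$ after embedding $S$ by $H$), and is discrete because $H^0(S,T_S)=0$. Your two-stage factorization through $O(\mathrm{NS}(S))$ before passing to the symmetric group is only cosmetically different, and your closing caveat about positive characteristic is exactly the point the paper handles with the Noetherianity of $\mathrm{PGL}_{N+1}(k)$.

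There is, however, one step that fails as written. To show that an isometry $g$ fixing every extremal ray of $\mathrm{Nef}(S)$ is the identity on $\mathrm{NS}(S)_{\mathbb{R}}$, you argue that $g$ multiplies each generator $l_i$ of an extremal ray by some $\lambda_i>0$ and that preservation of the intersection form forces $\lambda_i=1$. The form on $\mathrm{NS}(S)$ has signature $(1,\rho-1)$, and the nef cone of a K3 surface can have isotropic extremal rays (for instance the class of a fibre of an elliptic fibration), for which $(\lambda_i l_i)^2=\lambda_i^2\cdot 0=0$ gives no constraint on $\lambda_i$; so ``preserves lengths'' does not suffice. The repair is the one the paper builds in from the start by taking the $l_i$ to be \emph{primitive} elements of the lattice $\mathrm{NS}(S)$: since $g$ and $g^{-1}$ preserve $\mathrm{NS}(S)$ and send primitive vectors to primitive vectors, $g(l_i)=\lambda_i l_i$ with $l_i$ primitive forces $\lambda_i$ and $\lambda_i^{-1}$ to be positive integers, hence $\lambda_i=1$; the $l_i$ span $\mathrm{NS}(S)_{\mathbb{Q}}$ because $\mathrm{Nef}(S)$ contains an ample class and is therefore full-dimensional. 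With this substitution your argument goes through and is essentially identical to the paper's.
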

\begin{proof}
Let $l_1, \ldots, l_{c}$ be primitive elements of $\mathrm{NS}(S)$ such that 
$\mbR_{\geq 0}l_i$ for $1 \leq i \leq c$ are the extremal rays of  $\mathrm{Nef}(S)$. 
As $\mathrm{Aut}(S)$ acts on $\mathrm{Nef}(S)$, it also acts on the set $\{ l_i \}_{1 \leq i \leq c}$. 
Let $G$ be the finite group of automorphisms of this set  $\{ l_i \}_{1 \leq i \leq c}$, 
which is isomorphic to the $c$-th symmetric group.

The above paragraph implies that there is a natural map $\mathrm{Aut}(S) \to G$. 
It is enough to show that its kernel $K$ is finite. 
To this end, note that $H^0(S,T_S) = 0$, and so $K \subseteq \mathrm{Aut}(S)$ is discrete. 
Furthermore, $K$ acts as the identity on $\Pic(S)$, due to the fact that $\mathrm{NS}(S)  = \Pic(S)$.

Take any very ample divisor $H$, and let $S \subseteq \mbP^N$ be the embedding induced by it. 
Since $K$ acts as the identity on $\Pic(S)$, the group $K$ preserves $H$, and so $K$ preserves 
the embedding $S \subseteq \mbP^N$. 
In particular, $K \subseteq \mathrm{PGL}_{N+1} (k)$. 
As $K$ is discrete and $\mathrm{PGL}_{N+1} (k)$ is Noetherian, the group $K$ must be finite.
\end{proof}

Now, we can proceed with the proof of the main theorem of this section.
\begin{proof}[Proof of Theorem \ref{theorem:main_mds}]
Let $S$ be a K3 surface whose nef cone $\mathrm{Nef}(S)$ is not a rational polyhedron (Proposition \ref{proposition:k3}). 
Choose an ample line bundle $A$ on $S$, and embed $S$ into a projective space $\mathbb{P}^N$ via some multiple of $A$, 
so that $S \subseteq \mathbb{P}^N$ is projectively normal.

Let $X \subseteq \mathbb{P}^{N+1}$ be the projective cone of $S \subseteq \mathbb{P}^N$, and let $\overline{X}$ be the blow-up of $X$ at the vertex of the cone. We have the following morphisms: 
\[\xymatrix{
\overline{X} \ar[d] _{p} \ar[r] ^{f} & X \\
S,  & \\ 
}\]
where $p \colon \overline{X} \to S$ is the natural $\mbP^1$-bundle on $S$. Note that $f$ is a resolution of singularities of $X$, and its exceptional divisor $E$ is isomorphic to $S$ by $p|_{E}$.

Set $\Delta \coloneq E$. Then, by \cite[Section 5.1]{Gongyo}, we have that $-(K_{\overline{X}} + \Delta)$ is nef and big. 
Further, note that $\Pic(\overline{X}) = \mathrm{NS}(\overline{X})$. Indeed, 
any numerically trivial line bundle $L$ on $\overline{X}$ descends to $S$ by the Grauert theorem 
(\cite[Ch.\ III. Corollary 12.9]{Hartshorne}), 
and any numerically trivial line bundle on a K3 surface is trivial. 

Let $F \simeq \mbP^1$ be a general fibre of $p$. Consider a subcone
\[
\{ D \in \mathrm{Nef}(\overline{X}) \mid D \cdot F = 0 \} \subseteq \mathrm{Nef}(\overline{X}).
\]
By the above paragraph, this subcone is isomorphic to $\mathrm{Nef}(S)$, and hence it is not a rational polyhedron. In particular, 
$\mathrm{Nef}(\overline{X})$ is not a rational polyhedron, and $\overline{X}$ is not a Mori dream space.
\end{proof}

\begin{remark}
Let $(X,\Delta)$ be a log canonical  projective three-dimensional pair defined over $\Fp$.
Let $D$ be a big and nef $\mbQ$-Cartier $\mbQ$-divisor on $X$. Let 
\[
\overline{\mathrm{NE}}(X)|_{<(K_X + \Delta + D)}
\]
be the cone of effective curves $C$ satisfying $C \cdot (K_X + \Delta + D)  < 0$. When $(X,\Delta)$ is klt, then $\overline{\mathrm{NE}}(X)|_{<(K_X + \Delta + D)}$ is a rational polyhedron by the cone theorem. Hence, it is natural to ask whether this still holds without the klt assumption. Unfortunately, Theorem \ref{theorem:main_mds} shows that this is not the case, and so the cone theorem does not extend to a setting similar to the one of Theorem \ref{theorem:main}.
\end{remark}
  
\section*{Acknowledgements}
We would like to thank Paolo Cascini, Yoshinori Gongyo, Yujiro Kawamata, Diletta Martinelli, Keiji Oguiso, 
Hiromu Tanaka and Joe Waldron for useful comments and suggestions.
The first author is supported by the Grant-in-Aid for Scientific Research (KAKENHI No. 25-3003). The second author is funded by EPSRC.

\begin{bibdiv}
 \begin{biblist*}
\bib{Artin}{article}{
   author={Artin, Michael},
   title={Some numerical criteria for contractability of curves on algebraic
   surfaces},
   journal={Amer. J. Math.},
   volume={84},
   date={1962},
   pages={485--496},
}

\bib{Birkar}{article}{
   author={Birkar, Caucher},
   title={Existence of flips and minimal models for 3-folds in char $p$},
   journal= {Annales Scientifiques de l'ENS (to appear)},
   eprint={arXiv:1311.3098v2}
}

\bib{BCHM}{article}{
   author={Birkar, Caucher},
   author={Cascini, Paolo},
   author={Hacon, Christopher D.},
   author={McKernan, James},
   title={Existence of minimal models for varieties of log general type},
   journal={J. Amer. Math. Soc.},
   volume={23},
   date={2010},
   number={2},
   pages={405--468},
}

\bib{BW}{article}{
   author={Birkar, Caucher},
   author={Waldron, Joe},
   title={Existence of Mori fibre spaces for 3-folds in char $p$},
   eprint={arXiv:1410.4511v1}
}

\bib{CTX}{article}{
   author={Cascini, Paolo},
   author={Tanaka, Hiromu},
   author={Xu, Chenyang},
   title={On base point freeness in positive characteristic},
   journal={Ann. Sci. \'Ec. Norm. Sup\'er. (4)},
   volume={48},
   date={2015},
   number={5},
   pages={1239--1272},
}

\bib{DK}{article}{
   author={Dolgachev, I.},
   author={Kond{\=o}, S.},
   title={A supersingular $K3$ surface in characteristic 2 and the Leech
   lattice},
   journal={Int. Math. Res. Not.},
   date={2003},
   number={1},
   pages={1--23},
}

\bib{Gongyo}{article}{
   author={Gongyo, Yoshinori},
   title={On weak Fano varieties with log canonical singularities},
   journal={J. Reine Angew. Math.},
   volume={665},
   date={2012},
   pages={237--252},
}

\bib{HK}{article}{
   author={Hu, Yi},
   author={Keel, Sean},
   title={Mori dream spaces and GIT},
   note={Dedicated to William Fulton on the occasion of his 60th birthday},
   journal={Michigan Math. J.},
   volume={48},
   date={2000},
   pages={331--348},
}

\bib{HX}{article}{
   author={Hacon, Christopher D.},
   author={Xu, Chenyang},
   title={On the three dimensional minimal model program in positive
   characteristic},
   journal={J. Amer. Math. Soc.},
   volume={28},
   date={2015},
   number={3},
   pages={711--744},
}

\bib{H}{article}{
   author={Hashizume, Kenta},
   title={Remarks on the abundance conjecture},
   eprint={arXiv:1509.04626v2}
}

\bib{Hartshorne}{book}{
   author={Hartshorne, Robin},
   title={Algebraic geometry},
   note={Graduate Texts in Mathematics, No. 52},
   publisher={Springer-Verlag},
   place={New York},
   date={1977},
}

\bib{Kawamata}{article}{
   author={Kawamata, Yujiro},
   title={Flops connect minimal models},
   journal={Publ. Res. Inst. Math. Sci.},
   volume={44},
   date={2008},
   number={2},
   pages={419--423},
}

\bib{KMM}{article}{
   author={Kawamata, Yujiro},
   author={Matsuda, Katsumi},
   author={Matsuki, Kenji},
   title={Introduction to the minimal model problem},
   book={
      series={Adv. Stud. Pure Math.},
      volume={10},
      publisher={North-Holland, Amsterdam},
   },
   date={1987},
   pages={283--360},
}

\bib{KollarMori}{book}{
   author={Koll{\'a}r, J{\'a}nos},
   author={Mori, Shigefumi},
   title={Birational geometry of algebraic varieties},
   series={Cambridge Tracts in Mathematics},
   volume={134},
   publisher={Cambridge University Press, Cambridge},
   date={1998},
}

\bib{Keel}{article}{
   author={Keel, Se{\'a}n},
   title={Basepoint freeness for nef and big line bundles in positive
   characteristic},
   journal={Ann. of Math. (2)},
   volume={149},
   date={1999},
   number={1},
   pages={253--286},
}

\bib{Lazarsfeld1}{book}{
   author={Lazarsfeld, Robert},
   title={Positivity in algebraic geometry. I},
   publisher={Springer-Verlag, Berlin},
   date={2004},
}

\bib{MNW}{article}{
   author={Martinelli, Diletta},
   author={Nakamura, Yusuke},
   author={Witaszek, Jakub},
   title={On the basepoint-free theorem for log canonical threefolds over
   the algebraic closure of a finite field},
   journal={Algebra Number Theory},
   volume={9},
   date={2015},
   number={3},
   pages={725--747},
}

\bib{Tanaka}{article}{
   author={Tanaka, Hiromu},
   title={Minimal models and abundance for positive characteristic log
   surfaces},
   journal={Nagoya Math. J.},
   volume={216},
   date={2014},
   pages={1--70},
}

\bib{TVV}{article}{
   author={Testa, Damiano},
   author={V{\'a}rilly-Alvarado, Anthony},
   author={Velasco, Mauricio},
   title={Big rational surfaces},
   journal={Math. Ann.},
   volume={351},
   date={2011},
   number={1},
   pages={95--107},
}

\bib{Ueno}{article}{
   author={Ueno, Kenji},
   title={A remark on automorphisms of Kummer surfaces in characteristic
   $p$},
   journal={J. Math. Kyoto Univ.},
   volume={26},
   date={1986},
   number={3},
}

\bib{Waldron}{article}{
   author={Waldron, Joe},
   title={The LMMP for log canonical 3-folds in char $p$},
   eprint={arXiv:1603.02967v1}
}

\bib{Xu}{article}{
   author={Xu, Chenyang},
   title={On the base-point-free theorem of 3-folds in positive
   characteristic},
   journal={J. Inst. Math. Jussieu},
   volume={14},
   date={2015},
   number={3},
   pages={577--588},
}

\end{biblist*}
\end{bibdiv}
\end{document}